\documentclass[10pt]{article}
\usepackage{amssymb,color}
\usepackage{amsmath,graphicx,enumerate,textcomp}
\usepackage[abbrev]{amsrefs}
\usepackage[a4paper,top=1.in, bottom=1.5in, left=.8in, right=.8in, headsep=0.in, centering]{geometry}
\usepackage{amssymb,amsfonts,amsthm}
\usepackage{epsfig}
\numberwithin{equation}{section}
\usepackage{lipsum}
\usepackage{graphicx}
\usepackage{esint}

\newcommand{\eps}{\varepsilon}

\newtheorem{theorem}{Theorem}[section]
\newtheorem{lemma}{Lemma}[section]

\newtheorem{remark}{Remark}[section]
\newtheorem{proposition}{Proposition}[section]
\newtheorem{definition}{Definition}[section]

\title{On shifting the  thermal explosion threshold by a vortical flow in dimension two}

\author{ Tianyi Guo
\thanks{Department of Mathematical Sciences, 
Kent State University,
 Kent, OH 44242, USA. E-mail: {\tt tguo2@kent.edu  }}
\and Peter  V. Gordon
\thanks{Department of Mathematical Sciences, 
Kent State University,
 Kent, OH 44242, USA. E-mail: {\tt gordon@math.kent.edu}}
 }

\begin{document}
\maketitle

\begin{abstract}
This paper is concerned with a  study of  a natural generalization of  a classical Frank-Kamenetskii model of thermal explosion
 in the presence of a vortical flow in a two dimensional setting.
 This model describes  possible stationary temperature distributions  in a combustion vessel which boundary is maintained at a constant temperature.
 The model constitutes a Dirichlet boundary value 
 problem for a certain semi-linear elliptic equation that depends on a parameter $\lambda,$ called  Frank-Kamenetskii parameter.
  A remarkable property of this problem  is that it admits a classical minimal solution when
 the Frank-Kamenetskii parameter does not exceed some critical value  $\lambda^*$ and no classical solutions for $\lambda>\lambda^*$.
 The absence of a classical solution, in the framework  of  Frank-Kamenetskii  theory,  is associated with a thermal explosion event.
  Consequently, in  the context of combustion,  $\lambda^*,$ commonly called  an explosion threshold, is a maximal value of the Frank-Kamenetskii parameter which  allows to attain a thermal equilibrium within a combustion vessel and thus provides a sharp characterization of  the thermal explosion. A critical  temperature distribution corresponding to $\lambda^*$ is called an extremal solution.
 
 In this paper, we  show that, under an assumption of sufficiently fast growth of the reaction term,  
 there exists  a regular vortical flow that allows to adjust an explosion threshold by reversing its  direction,  provided a combustion vessel is not a disk.
 We also give rather detailed description of extremal solutions. In particular, we show that extremal solutions are always classical.

\end{abstract}

\medskip

\noindent {\bf Keywords:} Frank-Kamenetskii model of thermal explosion, Gelfand problem,  Qualitative dependency of solutions on  parameters, Regularity of extremal solutions.

\bigskip

\noindent {\bf AMS subject classifications:} 
35B30, 
35B45, 
35A01, 
35A2,  
35J25, 
35J61, 
35B09, %
80A25.  

\section{Introduction}\label{s:1}

In this paper we discuss some qualitative properties of solutions for  the following boundary value problem:
\begin{eqnarray}\label{eq:i1}
\left\{
\begin{array}{lll}
-\Delta u+A {\bf v} \cdot \nabla u=\lambda f(u) & \mbox{in} & \Omega, \\
u=0 & \mbox{on} & \partial \Omega,
\end{array}
\right.
\end{eqnarray}
where, $\Omega \in \mathbb{R}^2$ is an open bounded set with a sufficiently smooth (at least  $C^{2,\alpha}$)   boundary $\partial \Omega;$     $\lambda>0, ~A \in \mathbb{R}$ are parameters of the problem;
$f$ is the reaction term and ${\bf v}$ is a prescribed velocity  field satisfying the following hypothesis:
\begin{itemize}
\item (H1) The reaction term $f$ is a positive,  convex increasing  $C^2$ function of sufficiently fast growth at infinity. Namely, $f^{\prime}(s), f^{\prime\prime}(s)>0$ and $f^{\prime\prime}(s)$ is strictly increasing on $(0,\infty).$

\item (H2) The  $C^1$  vector field ${\bf v}=(v_x,v_y)$ prescribes an  incompressible vortical flow ${\bf v}=\nabla^{\perp} \psi=(\frac{\partial}{\partial y}, -\frac{\partial}{\partial x}  ) \psi,$ where $\psi$ is a  $C^2$ stream function and
satisfies no penetration condition  ${\bf v}\cdot {\bf \nu}=0,$ where $\nu$ is a unit normal to $\partial \Omega$.

\end{itemize}

Problem \eqref{eq:i1} falls into a general class of Gelfand type problems (see e.g.  \cite{Gelfand,Dupaigne}), which in turn are generalizations of a classical  model  of the stationary theory of a  thermal explosion derived in 1939 by D.A. Frank-Kamenetskii \cite{FK39}, see \cite{FK,ZBLM} for more details.
The latter is problem \eqref{eq:i1} with $A=0$ and $f(u)=e^u$. In the context of Frank-Kamenetskii theory,  $u$ and $f$ are an appropriately normalized temperature and the reaction rate
in a combustion vessel which boundary is maintained at a constant temperature (cold boundary). The parameter $\lambda>0$ measures the reaction strength and is called Frank-Kamenetskii
parameter.

Over the past 85 years, problem \eqref{eq:i1}  with $A=0$ received a considerable attention of engineers, physicists and mathematicians. Some of the  major mathematical results concerning
this problem can be found \cite{Gelfand,Fujita,CR75,KC67,KK74,Brezis1,Brezis2,Nedev,Martel,Cabre}.  The list is  far from being  complete.
 We refer to a book \cite{Dupaigne} for review 
of  mathematical results and \cite{FK,ZBLM,Sem,Law,Will} for the derivation, various applications and generalizations of this  problem in the context of combustion.

One of  the most distinctive feature of problem \eqref{eq:i1} with $A=0$ considered in a smooth bounded domain $\Omega\in\mathbb{R}^n$ is that there exists a critical value $0<\lambda^*<\infty$ such that this problem admits a  classical (generally not unique) solution
  for $\lambda\in (0,\lambda^*);$  the solution (possibly weak)  for $\lambda=\lambda^*$; and no solutions, even in a weak sense,  for $\lambda>\lambda^*$  \cite{Brezis1}.
\footnote{This result is very general and holds for \eqref{eq:i1} with $A=0$  considered in a smooth bounded domain $\Omega\in \mathbb{R}^n$ with a more general assumption
on the nonlinear term: $f$ is a positive non-decreasing convex $C^1$  function such that $\int_0^{\infty} ds/f(s) <\infty.$ }
From a physical perspective, this means that there is a critical value of Frank-Kamenetskii parameter $\lambda^*$  such that the chemical 
reaction within the combustion vessel can be balanced by the diffusion and a cold boundary,  provided $\lambda\in (0,\lambda^*]$, whereas when the reaction strength is above
 its critical value ($\lambda>\lambda^*$), no such balance is possible. One can show, see  \cite{Fujita, Brezis1}, that the  solution of parabolic version of problem \eqref{eq:i1} with zero initial
 condition will approach, as time goes to infinity, to the minimal (smallest) solution of \eqref{eq:i1} provided the latter exists and blows up in a finite time otherwise.
 This, in turn, implies that the temperature distribution within combustion vessel assumes a stationary profile for a Frank-Kamenetskii parameter at or below its
 critical value. This situation corresponds to a process of a slow oxidation. In contrast, when Frank-Kamenetskii parameter exceeds its critical value,  the intensive
 chemical reaction can't be suppressed by the diffusion and a cold boundary, which ultimately leads to a thermal explosion (thermal runaway).
 Consequently, the critical Frank-Kamenetskii parameter $\lambda^*$ provides a sharp characterization of a thermal explosion and hence  is typically called an explosion threshold.

 From the discussion above, it is clear that  minimal solutions of problem \eqref{eq:i1} with $A=0$ play a key role in the analysis of a spontaneous thermal explosion.
 From a mathematical perspective, minimal solutions are the most regular solutions that problem can produce. On the top of being classical
 for $\lambda\in (0,\lambda^*),$ they are the only semi-stable solutions  of \eqref{eq:i1} \cite{Brezis1,Brezis2}. Semi-stability means that the principal eigenvalue
 of  linearization  of \eqref{eq:i1} about these solutions is always non-negative. The pointwise limit of minimal solutions as $\lambda\nearrow \lambda^*$
 is called an extremal solution. The extremal solution is the  unique solution of \eqref{eq:i1} with $A=0$  \cite{Martel}, but may not be classical. The regularity of the extremal solution attracted considerable attention in the past decade. Milestone  results in this direction were obtained in \cite{Brezis2,Nedev}. This problem
 was recently fully  settled in \cite{Cabre}. Specifically,  it  was  shown in \cite{Cabre}  that  under a minimal  assumption on   $f,$ the  extremal solutions
 for \eqref{eq:i1} with $A=0$ considered in $\Omega \in \mathbb{R}^n$ 
 are  classical provided $n\le 9$.  This result  is sharp, as there is an explicit example of an extremal solution which does not belong to $L^{\infty}$ in the dimension
 $n=10$.

 Problem \eqref{eq:i1} in the presence of the advection term $(A \ne 0)$ was apparently introduced for the first time in \cite{Kagan} in order to understand the impact
 of stirring on thermal explosion limits.  A comprehensive  mathematical analysis of this problem in a rather general setting was performed in \cite{Kiselev}. 
 The results of \cite{Kiselev} in particular imply that  problem \eqref{eq:i1} under hypotheses (H1) and (H2) preserve all major qualitative features
 of the classical Gelfand problem. Indeed, it was shown that for a given $\Omega$,   and fixed $f, {\bf v},$   there exists $0<\lambda^*(A)<\infty$ such that problem \eqref{eq:i1}
 admits a minimal positive classical semi-stable solution $u_{\lambda}^{\sharp}$ provided $\lambda\in(0,\lambda^*(A))$ and no classical solution for $\lambda>\lambda^*(A)$.
 Moreover, $\lambda^*(A)$ is uniformly bounded from  below by a positive constant which depends only on  the domain  $\Omega$ and nonlinearity $f,$ and from above
 by a constant which depends on $\Omega$, $f$ and the stream function $\psi,$ but not on the amplitude of the flow. 
 In addition, \cite{Kiselev} contains a  detailed analysis  of  the limiting behavior of \eqref{eq:i1} with  $A\to \infty.$ In the following section,  we will recall several
  results of \cite{Kiselev} relevant to the present work. We note that in analogy to the classical Gelfand problem,
  one can define an extremal solution $u^*$  for problem \eqref{eq:i1}. Namely,
  \begin{eqnarray}\label{eq:i2}
  u^*(x) := \lim_{\lambda\nearrow \lambda^*} u_{\lambda}^{\sharp} (x).
  \end{eqnarray} 
 The regularity and qualitative properties  of the extremal solutions for \eqref{eq:i1} satisfying hypothesis (H1) and (H2)  will be discussed in details in the proceeding sections.

  Numerical studies of problem \eqref{eq:i1}  presented in \cite{Kagan} show that  the dependency of the critical Frank-Kamenetskii parameter $\lambda^*$ on $A$  is quite non-trivial.
  In particular, the dependency of $\lambda^*(A)$ is not monotone (see Fig.15 in \cite{Kagan}).
  Physically, this result is not very surprising. Indeed,  the flow can either  increase the  critical Frank-Kamenetskii parameter by enhancing the diffusion; or reduce the explosion threshold by creating a  hot spot.

This observation suggests the following natural question. Given a smooth  domain $\Omega\in \mathbb{R}^2$ and the reaction term $f$ satisfying hypothesis (H1),
is it possible to create a flow ${\bf v}$ of a small amplitude $A$   satisfying (H2)  such that it will decrease or increase the explosion threshold 
by  reversing
the direction of the flow?  This question is very natural from the perspective of applications as adjustment of the explosion threshold by a flow plays an
essential role in certain combustion devices. The principal goal of this paper is to  answer this question.
 Our main result gives a  positive answer to this question under the assumption that $\Omega$ is not a disk. Specifically, we  prove the following result.
 
 \begin{theorem} \label{t:1}Fix $f$ satisfying  (H1) and assume that $\Omega$ is not a disk.  Then, there exists a stream function $\psi$ generating an incompressible vortical  flow  ${\bf v}$ satisfying (H2) 
 such that
 \begin{eqnarray}\label{eq:i3}
 \lambda^*(A)=\lambda^*(0)+\theta A+o(A), 
 \end{eqnarray}
 with some $\theta=\theta(\psi)>0.$
\end{theorem}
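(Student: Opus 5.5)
The plan is a first-order perturbation of the critical point $(\lambda^*(0),u^*_0)$, followed by a choice of $\psi$ that makes the resulting derivative nonzero. I will use that, for $A=0$ and $n=2$, the extremal solution $u_0^*$ is classical and the linearized operator $L_0=-\Delta-\lambda^*(0)f'(u_0^*)$ has principal eigenvalue zero. Its principal eigenfunction $\phi_0>0$ is simple. By the strict convexity in (H1), the point $(\lambda^*(0),u_0^*)$ is a nondegenerate fold (turning point) of the solution branch. I will take the analogous facts for $A\neq 0$ from the results of \cite{Kiselev} recalled in the next section, together with the regularity of extremal solutions proved later in the paper. With these in hand, $\lambda^*(A)$ is characterized as the value of $\lambda$ at which the system
\[
F(u,\lambda,A)=-\Delta u+A\,{\bf v}\cdot\nabla u-\lambda f(u)=0,\qquad \mu_1\big(D_uF(u,\lambda,A)\big)=0
\]
holds.

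To compute the derivative, I will apply the Crandall--Rabinowitz turning-point theorem to $F:C^{2,\alpha}_0\times\mathbb{R}\times\mathbb{R}\to C^{\alpha}$. Because the kernel of $L_0$ is one-dimensional and $\int f(u_0^*)\phi_0>0$, the solution set near $(u_0^*,\lambda^*(0),0)$ is a smooth surface, parametrized by $s$ and $A$, on which $\lambda=\Lambda(s,A)$. This function satisfies $\Lambda_s(0,0)=0$, and $\Lambda_{ss}(0,0)<0$ since $f''>0$. The fold value $\lambda^*(A)$ is $\max_s\Lambda(s,A)$, so $\lambda^*(A)=\lambda^*(0)+\Lambda_A(0,0)A+o(A)$. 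Here I must check that the fold point is the extremal point, that is, that minimal solutions for $A$ small stay near $u_0^*$; I expect this to follow from uniform estimates and the uniqueness of semi-stable solutions. Differentiating $F=0$ in $A$ and testing against $\phi_0$, which is allowed because $L_0$ is self-adjoint, gives
\[
\theta=\Lambda_A(0,0)=\frac{\int_\Omega ({\bf v}\cdot\nabla u_0^*)\,\phi_0\,dx}{\int_\Omega f(u_0^*)\,\phi_0\,dx}.
\]
Since $\theta$ is linear in $\psi$, replacing $\psi$ by $-\psi$ (reversing the flow) changes its sign. It therefore suffices to find some $\psi$ with $\theta(\psi)\neq0$.

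Writing ${\bf v}=\nabla^\perp\psi$, the numerator becomes $\int\nabla^\perp\psi\cdot\nabla u_0^*\,\phi_0=\int\psi\,\{u_0^*,\phi_0\}$, where $\{a,b\}=\partial_xa\,\partial_yb-\partial_ya\,\partial_xb$. I will restrict to $\psi\in C_c^\infty(\Omega)$, so that (H2) holds automatically and the integration by parts produces no boundary terms. If $\theta(\psi)=0$ for all such $\psi$, then $\{u_0^*,\phi_0\}\equiv0$, meaning $\nabla u_0^*\parallel\nabla\phi_0$ everywhere in $\Omega$.

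The main obstacle is to show that this parallelism forces $\Omega$ to be a disk. Near $\partial\Omega$ we have $\partial_\nu u_0^*<0$ by the Hopf lemma, so the level sets of $u_0^*$ there are smooth curves, and $\phi_0$ is constant on each of them; hence locally $\phi_0=g(u_0^*)$ for some function $g$. Away from critical points of $u_0^*$, the two equations $-\Delta u=\lambda f(u)$ and $-\Delta g(u)=\lambda f'(u)g(u)$ then give $g''(u)|\nabla u|^2=\lambda g'f-\lambda f'g$. I plan to show first that $g'\neq0$ near the boundary, and deduce that $|\nabla u_0^*|^2$ is a function of $u_0^*$ there. In that case $u_0^*$ has level curves of constant curvature near $\partial\Omega$, and in particular $|\partial_\nu u_0^*|$ is constant on $\partial\Omega$. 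This is an overdetermined Serrin-type problem. By analytic continuation the level-set structure propagates inward, and Serrin's theorem (or the moving plane method) then forces $\Omega$ to be a disk, contradicting the hypothesis. If the case $g'\equiv0$ near the boundary is problematic, I will instead use the strict monotonicity of $f''$ to rule out the degenerate case $g''|\nabla u|^2=\lambda(g'f-f'g)$ holding without $|\nabla u|$ being a function of $u$.
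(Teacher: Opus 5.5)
Your argument has the paper's architecture except in how you get the expansion. The final step has a genuine gap, which the paper's Lemma~\ref{l:41} shares.

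\textbf{Common parts.} Both use the first-order formula $\theta=\int_\Omega({\bf v}\cdot\nabla u_0^*)\phi_0/\int_\Omega f(u_0^*)\phi_0$. Both rewrite the numerator against the Jacobian of $(u_0^*,\phi_0)$; the correct sign is $-\int_\Omega\psi\,\nabla u_0^*\cdot\nabla^\perp\phi_0$, which is harmless. Both take $\psi$ compactly supported where the Jacobian has a sign, flip $\psi\to-\psi$ if needed, and exclude a vanishing Jacobian by functional dependence near $\partial\Omega$ plus Serrin.

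\textbf{The derivative step.} The paper (Lemma~\ref{l:34}) proves differentiability of $\lambda^*$ at every $A$ by hand:
\begin{itemize}
\item a Lipschitz bound from super-solution comparisons;
\item Fredholm solvability conditions for $u^\eps-u^0$, tested against the adjoint principal eigenfunctions at $A_0$ and $A_0+\eps$, whose convexity-induced signs squeeze the difference quotient;
\item $H^1$ convergence of $u^\eps$ and $L^2$ convergence of the adjoint eigenfunctions.
\end{itemize}
Your Lyapunov--Schmidt argument at $A=0$ is more standard and exhibits the local solution set where it matters. Its only nontrivial step is the one you flag, and it closes as follows.
\begin{itemize}
\item The lower bound $\lambda^*(A)\ge\Lambda(0,A)$ is free, because the local branch consists of classical solutions.
\item The upper bound needs $(u^*_A,\lambda^*(A))\to(u_0^*,\lambda^*(0))$. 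This comes not from uniqueness of semi-stable solutions (unavailable for $A\ne0$) but from compactness. The integral bound in the proof of Lemma~\ref{l:31} is uniform for $|A|\le1$, hence so are the $C^{2,\alpha}$ bounds on $u^*_A$.
\item Any limit $(\bar u,\bar\lambda)$ is a classical solution for $A=0$, so $\bar\lambda\le\lambda^*(0)$. The lower bound gives equality, and Lemma~\ref{l:32} gives $\bar u=u_0^*$.
\item Then $\lambda^*(A)=\Lambda(s_A,A)$ with $s_A\to0$, so $\lambda^*(A)=\Lambda(s_A,0)+\Lambda_A(0,0)A+o(A)\le\lambda^*(0)+\Lambda_A(0,0)A+o(A)$.
\end{itemize}
This uses only $\Lambda\in C^1$, so the non-degenerate fold is unnecessary. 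That helps, because for $f$ merely $C^2$ your $F$ is in general only $C^1$ from $C^{2,\alpha}_0\times\mathbb{R}^2$ to $C^\alpha$, and $\Lambda_{ss}$ is not available there.

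\textbf{The final step: a minor slip.} $g'\neq0$ does not let you solve $g''(u)|\nabla u|^2=\lambda(g'f-f'g)$ for $|\nabla u|^2$; you need $g''\neq0$. Evaluate at $u=0$: $g(0)=0$ and $g'(0)=\partial_\nu\phi_0/\partial_\nu u_0^*>0$ give $g''(0)|\partial_\nu u_0^*|^2=\lambda^*(0)f(0)g'(0)>0$. Hence $|\partial_\nu u_0^*|$ is constant, exactly as in the paper. No analytic continuation is needed, and it is unavailable for $f\in C^2$ anyway.

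\textbf{The final step: the real gap.} The function $g$ is determined only near each component of $\partial\Omega$ separately. If $\partial\Omega$ is disconnected, you get a possibly different constant on each component, and Serrin's theorem does not apply. This cannot be repaired: the statement is false on an annulus.
\begin{itemize}
\item $u_0^*$ is radial by uniqueness (Lemma~\ref{l:32}).
\item $\phi_0$ is radial by simplicity of the principal eigenvalue.
\item Hence $\nabla u_0^*\cdot\nabla^\perp\phi_0\equiv0$, and $\theta(\psi)=0$ for every admissible $\psi$.
\end{itemize}
You must assume $\partial\Omega$ connected, i.e.\ $\Omega$ simply connected. Then $g$ is defined on a whole collar of $\partial\Omega$ and the argument closes. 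In two dimensions your constant-curvature observation, extended to $\partial\Omega$, then shows directly that $\partial\Omega$ is a circle, without Serrin.
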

 The proof of this result is nontrivial as incompressibility condition imposes substantial restrictions of possible choices of a flow. 
 We note that the proof is constructive
 and gives a direct recipe of how to design a  flow (or rather a stream function) such that  a  critical Frank-Kamenetskii parameter has a  property \eqref{eq:i3}. 
 In particular, if one assumes in addition to (H1) that $f\in C^3$, there is an explicit expression for  a stream function which generates a flow satisfying 
 the assumption (H2) that guarantees an optimal switching regime (see Remark \ref{r:1}).
 
 Let us note that in the case when $\Omega$ being a disk (or more generally a ball in $\mathbb{R}^n$), an incompressible flow can only increase the explosion threshold as follows from \cite[Theorem 1.2]{Novikov},
 hence switching by the flow is impossible in this case when changing the sign of $A$ even when $A$ is large. A similar effect is observed in the studies of exit times of diffusions with incompressible drift, see \cite{Iyer}  and references therein.

 The proof of Theorem \ref{t:1}  is based, in part,  on several results which provide rather detailed  information on the  behavior 
 of extremal solutions of problem \eqref{eq:i1} satisfying (H1) and (H2). 
 These results are  of independent interest and summarized as follows.
 
 \begin{proposition}\label{p:1}  Fix the domain  $\Omega$, the reaction function $f$  satisfying  (H1) and the flow ${\bf v}$ satisfying  (H2). 
 Then, 
 \begin{itemize}
 \item[1)] For any  $A \in \mathbb{R},$  the  extremal solution $u^*(\cdot, A)$  of \eqref{eq:i1} is classical; 
 \item [2)] For any  $A \in \mathbb{R},$ the extremal solution  $u^*(\cdot, A)$  is the unique classical solution of \eqref{eq:i1} for $\lambda=\lambda^*(A);$
 \item [3)]  The linearization of \eqref{eq:i1} on the extremal solution $u^*$  is degenerate.  That is, the principal eigenvalue of  ${\cal L}_*:=-\Delta +A {\bf v} \cdot \nabla -\lambda^*(A)  f^{\prime} (u^*)$
 with Dirichlet boundary conditions is zero;
  \item[4)]  The critical  Frank-Kamenetskii  parameter $\lambda^*(A)$ is continuous and differentiable with respect to $A$.
\end{itemize} 
 \end{proposition}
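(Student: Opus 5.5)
The plan is to prove the four items in order, each one feeding the next. The central tool is a uniform $L^\infty$ bound on the minimal solutions $u_\lambda^\sharp$ as $\lambda\nearrow\lambda^*(A)$. In dimension two this comes from semi-stability together with the growth hypothesis (H1). Throughout we use the results of \cite{Kiselev} recalled above: existence of minimal classical semi-stable solutions $u^\sharp_\lambda$ for $\lambda<\lambda^*(A)$, their monotonicity in $\lambda$, and $0<\lambda^*(A)<\infty$.

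\emph{Item 1).} Let $\phi_\lambda>0$ be the principal eigenfunction of the linearized operator $-\Delta+A{\bf v}\cdot\nabla-\lambda f'(u^\sharp_\lambda)$, whose eigenvalue $\mu_\lambda\ge0$. The operator is not self-adjoint, so the classical Crandall--Rabinowitz/Nedev test-function argument cannot be applied verbatim. Two facts help. First, $\nabla\cdot{\bf v}=0$ and ${\bf v}\cdot\nu=0$, so $\int_\Omega ({\bf v}\cdot\nabla w)\,w=0$ for $w\in H^1_0$. Second, semi-stability for a non-symmetric operator yields a symmetric Hardy-type inequality $\int \lambda f'(u)w^2\le\int|\nabla w|^2+\dots$ after the ground-state substitution $w=\phi\,\eta$. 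I would test the equation with $g(u^\sharp_\lambda)$, where $g(s)=\int_0^s f'(t)^2\,dt$ (Nedev's choice). The drift term $\int({\bf v}\cdot\nabla u)g(u)=\int{\bf v}\cdot\nabla G(u)=0$ vanishes, where $G'=g$. Combining this with the stability inequality applied to $w=f(u)-f(0)$, whose drift contribution also vanishes by incompressibility, gives the bound $\|f(u^\sharp_\lambda)\|_{L^1}+\|f'(u^\sharp_\lambda)\|_{L^2}\cdot(\dots)\le C$ uniformly in $\lambda$. In dimension two, $L^1$ right-hand sides give $u\in W^{1,q}$ for all $q<2$, and Nedev's argument yields $f(u^\sharp_\lambda)\in L^p$ for some $p>1=n/2$. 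Elliptic $W^{2,p}$ estimates, with ${\bf v}\in C^1$ bounded, then give $u^\sharp_\lambda$ bounded in $L^\infty$. A bootstrap gives $C^{2,\alpha}$ bounds, so $u^*$ is classical. I expect this to be the main obstacle: making the stability inequality work for the non-self-adjoint linearization. If the ground-state substitution fails, the fallback is to use the adjoint eigenfunction $\phi^\dagger$ and the identity $\int(\cdots)\phi^\dagger=\mu\int\phi\phi^\dagger$.

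\emph{Item 3).} The principal eigenvalue $\mu_*$ of ${\cal L}_*$ is $\ge0$, as the limit of $\mu_\lambda\ge0$; this limit exists because $u^\sharp_\lambda\to u^*$ in $C^2$. Suppose $\mu_*>0$. Then ${\cal L}_*$ is invertible on $C^{2,\alpha}\cap\{u|_{\partial\Omega}=0\}$, and the implicit function theorem applied to $F(u,\lambda)=-\Delta u+A{\bf v}\cdot\nabla u-\lambda f(u)$ at $(u^*,\lambda^*)$ produces classical solutions for $\lambda$ slightly larger than $\lambda^*$. This contradicts the definition of $\lambda^*(A)$, so $\mu_*=0$.

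\emph{Item 2).} Let $w$ be another classical solution at $\lambda^*$. By minimality $w\ge u^*$, hence $z=w-u^*\ge0$. Convexity of $f$ gives
\[
{\cal L}_* z=\lambda^*\big(f(w)-f(u^*)-f'(u^*)z\big)\ge0.
\]
Multiply by the positive principal eigenfunction $\phi^\dagger$ of the adjoint ${\cal L}_*^\dagger$, whose eigenvalue is also $0$, and integrate. The left side is zero, so $f(w)-f(u^*)-f'(u^*)z\equiv0$. Strict convexity ($f''>0$) then forces $z\equiv0$.

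\emph{Item 4).} Apply the implicit function theorem to $\Phi(u,\lambda,A)=\big(F(u,\lambda,A),\ \langle\phi^\dagger,\cdot\rangle\text{-type constraint}\big)$ near the fold point, in the Crandall--Rabinowitz manner. The kernel of ${\cal L}_*$ is one-dimensional and spanned by $\phi_*>0$. The transversality condition $\int f(u^*)\phi^\dagger_*\neq0$ holds because $f>0$ and $\phi^\dagger_*>0$. This gives a smooth local fold curve $s\mapsto(u(s,A),\lambda(s,A))$, and $\lambda^*(A)=\max_s\lambda(s,A)$ with nondegenerate maximum. The nondegeneracy uses $\int f''(u^*)\phi_*^2\phi^\dagger_*>0$. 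The implicit function theorem applied to $\partial_s\lambda=0$ then gives differentiability of $\lambda^*$ in $A$. Differentiating the equation and pairing with $\phi^\dagger_*$ identifies the derivative as
\[
\frac{d\lambda^*}{dA}=\frac{\int({\bf v}\cdot\nabla u^*)\phi^\dagger_*}{\int f(u^*)\phi^\dagger_*}.
\]
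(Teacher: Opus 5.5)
\paragraph{Item 1: the drift does not drop out of the stability inequality.}
Your item~1 relies on the claim that the drift contribution to the stability inequality vanishes by incompressibility, and that claim is false. Take $w=\phi\eta$, where $\phi>0$ is the principal eigenfunction of ${\cal L}=-\Delta+A{\bf v}\cdot\nabla-\lambda f'(u)$ with eigenvalue $\mu\ge 0$. Then
\[
\int_\Omega|\nabla w|^2=\int_\Omega\phi^2|\nabla\eta|^2+\int_\Omega\bigl(\mu+\lambda f'(u)\bigr)w^2-\frac{A}{2}\int_\Omega\eta^2\,{\bf v}\cdot\nabla(\phi^2),
\]
and the last term is not zero in general.

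The symmetric inequality $\lambda\int f'(u)w^2\le\int|\nabla w|^2$ cannot follow from $\mu\ge 0$ alone. Testing ${\cal L}\phi=\mu\phi$ with $\phi$ and using $\int({\bf v}\cdot\nabla\phi)\phi=0$ gives $\mu\ge\mu_1(-\Delta-\lambda f'(u))$. So such a drift can only raise the principal eigenvalue, and semi-stability of the advected linearization is weaker information than symmetric semi-stability.

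What does hold is an inequality with a loss, obtained by absorbing the drift term with Cauchy--Schwarz. In the paper this is Lemma~\ref{l:p2}: for $\|{\bf v}\|_\infty=1$, $\int|\nabla\eta|^2\ge\alpha\lambda\int f'(u)\eta^2-\frac{\alpha A^2}{4(1-\alpha)}\int\eta^2$. Once this is inserted into Nedev's identity, the loss must be absorbed, and the paper does so with ingredients your plan lacks:
\begin{itemize}
\item the quantitative convexity gap $\int_0^s f'(t)^2\,dt\le(1-\delta)f'(s)(f(s)-f(0))$ of Lemma~\ref{l:p3}, which is exactly where the hypothesis in (H1) that $f''$ is strictly increasing enters (your proposal never uses it);
\item the fact that $f'(s)\to\infty$, which makes $\frac{A^2}{\lambda}\int(f(u)-f(0))^2$ negligible against $\int f'(u)(f(u)-f(0))^2$.
\end{itemize}
Only then do you get a uniform bound on $\int f'(u^\sharp_\lambda)(f(u^\sharp_\lambda)-f(0))^2$, hence $f(u^*)\in L^2$ and the bootstrap.

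Granted item~1, your items~3 and~2 are correct and a legitimate alternative to the paper. You use the implicit function theorem for item~3, and pair ${\cal L}_*(w-u^*)\ge 0$ with the positive adjoint eigenfunction for item~2. The paper instead derives both from Lemma~\ref{l:p5}, via the strict supersolutions $u^*+\eps(\phi_1+b\chi)$ and $(u^*+w)/2$.

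\paragraph{Item 4: the local fold does not identify $\lambda^*(A)$.}
The fold (Lyapunov--Schmidt) reduction at $(u^*(A_0),\lambda^*(A_0),A_0)$ describes only solutions in a small neighbourhood of $u^*(A_0)$. It gives $\Lambda(A):=\max_{|s|<s_0}\lambda(s,A)\le\lambda^*(A)$, since $\Lambda(A)$ is attained by a classical solution. Your identity $\lambda^*(A)=\max_s\lambda(s,A)$ also needs $\lambda^*(A)\le\Lambda(A)$. That is, the extremal pair $(u^*(A),\lambda^*(A))$ must lie in that neighbourhood for $A$ near $A_0$. Nothing in your plan rules out $\lambda^*$ jumping up with $u^*(A)$ far from $u^*(A_0)$.

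You need continuity of $A\mapsto u^*(A)$, which you can get as follows:
\begin{itemize}
\item Make the item-1 bounds locally uniform in $A$, and extract a $C^2$-convergent subsequence $u^*(A_n)\to\bar u$. The limit solves the problem at $A_0$ with some $\bar\lambda\le\lambda^*(A_0)$.
\item Note that $u^*(A_0)$ is a supersolution at $A_n$ for the parameter $\lambda^*(A_0)(1-C|A_n-A_0|)$. This gives $\bar\lambda\ge\lambda^*(A_0)$.
\item Item~2 then forces $\bar u=u^*(A_0)$.
\end{itemize}
Also, since $f$ is only $C^2$, pose the problem as a map $W^{2,p}\cap W^{1,p}_0\to L^p$ so that it is $C^2$, as the second-order fold argument requires.

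With these additions your route is a valid alternative to the paper's. The paper proves Lipschitz continuity of $\lambda^*$ by the same two-sided supersolution comparison. It then extracts the derivative from the Fredholm solvability conditions at $A_0$ and $A_0+\eps$, together with $u^{\eps}\to u^0$ in $H^1_0$. Your approach yields the same formula and, in addition, $\lambda^*\in C^1$.
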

 

The paper is organized as follows. In the next section, we recall  several known results  adopted to the setting considered in this paper and
prove relatively standard  lemmas needed for this work. In  Section \ref{s:3},   we give a proof of Proposition \ref{p:1}.    
Section \ref{s:4} contains a proof of the main result of this paper.

\section{Preliminaries}\label{s:2}

In this section, we state several known results and definitions  which will be used in the following sections. We start with recalling results from \cite{Kiselev} relevant to the present work.

Fix the domain $\Omega,$ the reaction function $f,$ and the stream function $\psi$ satisfying (H1) and (H2) respectively. Then, the solutions of   problem \eqref{eq:i1} have the following properties.

\medskip

\noindent {\bf i)} There exists an extremal value of Frank-Kamenetskii parameter $0<\lambda^*(A)<\infty$  such that problem  \eqref{eq:i1} admits  a minimal classical  solution $u^{\sharp}_{\lambda}$   for all $\lambda\in (0,\lambda^*(A))$
and no classical solutions for $\lambda>\lambda^*(A).$
\medskip

\noindent {\bf ii)}   Minimal solutions $u^{\sharp}_{\lambda}$ form an increasing family of functions parametrized by $\lambda$ for  $\lambda\in (0,\lambda^*(A)).$
\medskip

\noindent {\bf iii) }  The principal eigenvalue of linearized  problem \eqref{eq:i1} on the minimal solution for $\lambda\in(0,\lambda^*(A))$  is positive. That is, an eigenvalue problem:
\begin{eqnarray}\label{eq:p1}
\left\{
\begin{array} {lll}
{\cal L}(\phi)=\mu \phi &\mbox{in} & \Omega,\\
\phi=0 & \mbox{on} & \partial \Omega,
\end{array}
\right.
\end{eqnarray} 
with 
\begin{eqnarray} \label{eq:p2}
{\cal L}:= -\Delta+A {\bf v} \cdot \nabla -\lambda f^{\prime} (u^{\sharp}_{\lambda})
\end{eqnarray}
has its principal eigenvalue $\mu_1(\lambda)> 0,$ provided  $\lambda\in(0,\lambda^*(A)).$
\medskip

\noindent {\bf iv)} There exist a constant $\lambda_0>0$ which depends on $\Omega$ and $f,$ and a constant $\lambda_{\infty}<\infty$ which depends on $\Omega$, $f$ and the stream function $\psi$, but not on $A$ such that
$\lambda_0<\lambda^*(A)<\lambda_{\infty}$ for all $A\in\mathbb{R}.$
\medskip

Properties  {\bf i)}, {\bf ii)} and {\bf iii)} follow from \cite[Proposition 1.1]{Kiselev}. The lower bound on $\lambda^*(A)$  in {\bf iv)} was established in \cite[Theorem 1.2]{Kiselev}.
The upper bound on $\lambda^*(A)$  in {\bf iv)} follows from \cite[Theorem 1.4]{Kiselev} combined with an observation  that any  stream function $\psi$ satisfying (H2)  is the first integral of a  flow  ${\bf v} \in H_0^1(\Omega)$.
We recall that the first integral of a flow ${\bf v} \in H_0^1(\Omega)$ is a scalar function $w\in H^1(\Omega)$ such that ${\bf v}\cdot\nabla  w=0$ almost everywhere in $\Omega$
(see \cite[Defintion 0.1]{eigenvalue}).  

The construction of  the  minimal  solutions  \eqref{eq:i1} heavily relies on the method of super solutions. A super-solution for \eqref{eq:i1} is  defined as follows.
\begin{definition}
We say that $\bar u\in C^2(\Omega) \cap C(\bar \Omega),$  $\bar u>0$ in $ \Omega$ is a classical super-solution  of \eqref{eq:i1} provided
\begin{eqnarray}\label{eq:p3}
\left\{
\begin{array}{lll}
-\Delta \bar u +A {\bf v} \cdot \nabla \bar u \ge \lambda f(\bar u) & \mbox{in} & \Omega,\\
\bar u =0 & \mbox{on} & \partial \Omega.
\end{array}
\right.
\end{eqnarray}
\end{definition}
The presence of  a super-solution guarantees the existence of the minimal solution for \eqref{eq:i1}, namely  the following result holds.
\begin{lemma} \label{l:p1}
Assume that \eqref{eq:i1} admits a positive classical super-solution, then \eqref{eq:i1} admits a minimal classical solution $u^{\sharp}_{\lambda}\in C^{2,\alpha}(\bar \Omega).$ 
\end{lemma}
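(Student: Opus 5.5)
We prove Lemma \ref{l:p1} by the standard monotone iteration between the subsolution $\underline u\equiv 0$ and the given super-solution $\bar u$. Note that $0$ is a strict sub-solution because $f>0$. Since $\bar u\in C(\bar\Omega)$, it is bounded, so $M:=\max_{\bar\Omega}\bar u<\infty$. Set $K:=\lambda\max_{[0,M]}f'$. Then $s\mapsto \lambda f(s)+Ks$ is nondecreasing on $[0,M]$; in fact $f'>0$ by (H1), so already $K=0$ works.

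Define $u_0\equiv 0$. Given $u_k$, let $u_{k+1}$ be the solution of the linear Dirichlet problem
\begin{eqnarray*}
-\Delta u_{k+1}+A{\bf v}\cdot\nabla u_{k+1}=\lambda f(u_k) \ \mbox{in}\ \Omega,\qquad u_{k+1}=0\ \mbox{on}\ \partial\Omega .
\end{eqnarray*}
The operator $-\Delta+A{\bf v}\cdot\nabla$ has $C^1$ (hence bounded, Hölder) drift and no zeroth order term. Therefore the weak maximum principle holds, and Schauder theory on the $C^{2,\alpha}$ domain gives a unique solution $u_{k+1}\in C^{2,\alpha}(\bar\Omega)$ whenever the right-hand side is $C^{\alpha}$. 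This is the case inductively: $u_k\in C^{2,\alpha}$ and $f\in C^2$, so $f(u_k)\in C^{\alpha}$.

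We then show by induction that $0\le u_k\le u_{k+1}\le\bar u$.
\begin{itemize}
\item $u_1\ge 0=u_0$ by the maximum principle, since $f>0$.
\item If $u_{k-1}\le u_k$, then $L(u_{k+1}-u_k)=\lambda(f(u_k)-f(u_{k-1}))\ge 0$ by monotonicity of $f$, so $u_{k+1}\ge u_k$.
\item If $u_k\le\bar u$, then $L(\bar u-u_{k+1})\ge\lambda(f(\bar u)-f(u_k))\ge 0$ in $\Omega$, with $\bar u-u_{k+1}=0$ on $\partial\Omega$. Since $\bar u$ is only $C^2(\Omega)\cap C(\bar\Omega)$, we apply the maximum principle in the form valid for such functions, giving $u_{k+1}\le\bar u$.
\end{itemize}

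Hence $u_k\nearrow u$ pointwise, with $0\le u\le\bar u\le M$. The right-hand sides $\lambda f(u_k)$ are bounded by $\lambda f(M)$. $L^p$ estimates then give uniform $W^{2,p}$ bounds for all $p<\infty$, hence uniform $C^{1,\beta}$ bounds. These make $f(u_k)$ uniformly $C^{\beta}$, and Schauder estimates give uniform $C^{2,\beta}$ bounds. By Arzelà–Ascoli a subsequence, and then by monotonicity the whole sequence, converges in $C^2(\bar\Omega)$. The limit $u$ solves \eqref{eq:i1}, and a further Schauder bootstrap puts it in $C^{2,\alpha}(\bar\Omega)$. Moreover $u>0$ in $\Omega$ by the strong maximum principle, since $Lu=\lambda f(u)>0$.

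For minimality, take any classical solution $w\ge 0$ of \eqref{eq:i1}; in particular $w$ is itself a super-solution. Repeating the induction with $\bar u$ replaced by $w$ gives $u_k\le w$ for all $k$, hence $u\le w$. Thus $u=u^{\sharp}_\lambda$ is independent of the chosen super-solution and is minimal.

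The main obstacle is regularity bookkeeping rather than any new idea. The maximum-principle comparisons must be justified for $\bar u$, which is only $C^2$ inside $\Omega$ and continuous up to the boundary. The uniform estimates must be passed through the first-order drift term. Both are handled by standard theory (Gilbarg–Trudinger) because ${\bf v}\in C^1(\bar\Omega)$.
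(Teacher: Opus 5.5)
Your argument is correct and is essentially the paper's own. The paper defers to Sattinger's monotone iteration \cite[Theorem 2.1]{Satt}, as sketched in \cite[Lemma 2.3]{Kiselev}, which is exactly your scheme: iterate $u_{k+1}=(-\Delta+A{\bf v}\cdot\nabla)^{-1}\lambda f(u_k)$ from $u_0\equiv 0$ below the barrier $\bar u$, pass to the limit with uniform $W^{2,p}$ and Schauder bounds, and use that the iterates do not depend on $\bar u$ to get minimality by comparison with any classical solution.
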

A proof of this lemma is based on Sattinger's  monotone iterations arguments \cite[Theorem 2.1]{Satt} and is applicable to a wide class of elliptic boundary value problems which includes \eqref{eq:i1}. A brief sketch of these  arguments for \eqref{eq:i1} is  given in \cite[Lemma 2.3]{Kiselev}.


Let us also note that   the principal  eigenvalue $\mu_1(\lambda)$  of \eqref{eq:p2} is real and  simple. Hence,  the corresponding
eigenfunction  $\phi_1>0$ in $\Omega$, as follows from \cite[Theorem 2.1, Corollaries 2.1-2.3] {BNV}. Moreover, by \cite[Theorem 6.15]{GT}, 
$\phi_1\in C^{2,\alpha} (\bar \Omega).$

  The linear stability condition for minimal solutions of \eqref{eq:i1} (property ${\bf iii)}$ above)  in the  case when $A=0$,   implies that
 $\int_{\Omega} |\nabla \eta|^2 \ge \lambda \int_{\Omega}  f^{\prime} (u^{\sharp}_\lambda) \eta^2$ for all $\eta \in H_0^1(\Omega)$ and $\lambda\in(0,\lambda^*(0)),$ thanks to the variational characterization of the principal eigenvalue in this case \cite[Section 8.12]{GT}. This condition is often called  semi-stability condition.
 It is well known that there is no  variational characterization of the principal eigenvalue for \eqref{eq:p1}  in the case when $A\ne0$ and  ${\bf v}$ is a vortical flow. 
However, it is still possible to construct an integral relation involving appropriate class of test functions which, in some sense,  mimics the semi-stability condition.
Namely, the following  result holds.

\begin{lemma} \label{l:p2}
Fix the domain $\Omega$,  the reaction $f$ satisfying (H1)  and a vortical flow ${\bf v}$ satisfying (H2) which is  normalized such that  
$||{\bf v}||_{\infty}=1.$   Let $\lambda\in (0,\lambda^*(A))$  and  $u^{\sharp}_{\lambda}$ be the minimal solution of \eqref{eq:i1}.
Then, for any $\alpha\in(0,1)$ and any $\eta \in H_0^{1}(\Omega),$ we have,
\begin{eqnarray} \label{eq:p4}
\int_{\Omega}|\nabla \eta|^2 \ge \alpha \lambda \int_{\Omega} f^{\prime} (u^{\sharp}_{\lambda}) \eta^2 -\frac{\alpha A^2}{4(1-\alpha)}  \int_{\Omega} \eta^2.
\end{eqnarray}
\end{lemma}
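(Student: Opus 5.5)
The plan is to use the positive principal eigenfunction $\phi_1>0$ of \eqref{eq:p1} with eigenvalue $\mu_1(\lambda)>0$ (property \textbf{iii)}) in a ground-state substitution. Write $\eta=\phi_1 w$. We know $\phi_1\in C^{2,\alpha}(\bar\Omega)$, and by the Hopf lemma $\phi_1$ has nonvanishing normal derivative on $\partial\Omega$. So by density it suffices to take $\eta\in C_c^\infty(\Omega)$, in which case $w=\eta/\phi_1$ is smooth with compact support. Dividing the eigenvalue equation by $\phi_1$ gives
\begin{equation*}
\lambda f'(u^\sharp_\lambda)=-\frac{\Delta\phi_1}{\phi_1}+A\,{\bf v}\cdot\frac{\nabla\phi_1}{\phi_1}-\mu_1 .
\end{equation*}
Multiplying by $\eta^2$ and integrating, the first term is handled by the Picone-type identity
\begin{equation*}
\int_\Omega |\nabla\eta|^2=\int_\Omega \phi_1^2|\nabla w|^2-\int_\Omega \frac{\Delta\phi_1}{\phi_1}\eta^2 .
\end{equation*}
This is obtained by integrating $-\Delta\phi_1\,\phi_1 w^2$ by parts. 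Combining the two, and using $\mu_1>0$,
\begin{equation*}
\lambda\int_\Omega f'(u^\sharp_\lambda)\eta^2\le \int_\Omega|\nabla\eta|^2-\int_\Omega\phi_1^2|\nabla w|^2+A\int_\Omega ({\bf v}\cdot\nabla\phi_1)\,\phi_1 w^2 .
\end{equation*}

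The main step is controlling the drift term. The naive bound via $\nabla\phi_1$ is useless because $\phi_1$ is uncontrolled. Instead we use incompressibility, $\nabla\cdot{\bf v}=0$, together with the compact support of $w$:
\begin{equation*}
\int_\Omega ({\bf v}\cdot\nabla\phi_1)\phi_1 w^2=\tfrac12\int_\Omega {\bf v}\cdot\nabla(\phi_1^2)\,w^2=-\int_\Omega \phi_1^2 w\,{\bf v}\cdot\nabla w .
\end{equation*}
Since $\phi_1 w=\eta$ and $\|{\bf v}\|_\infty=1$, this is bounded by $|A|\int_\Omega|\eta|\,\phi_1|\nabla w|$. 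Applying Young's inequality with weight $\varepsilon$,
\begin{equation*}
|A|\int_\Omega |\eta|\,\phi_1|\nabla w|\le \varepsilon\int_\Omega\phi_1^2|\nabla w|^2+\frac{A^2}{4\varepsilon}\int_\Omega\eta^2 .
\end{equation*}

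We then choose the weight $\varepsilon$ so that the remaining $\int\phi_1^2|\nabla w|^2$ terms can be discarded. Multiplying the inequality by $\alpha$ and using $\int\phi_1^2|\nabla w|^2\le\int|\nabla\eta|^2+\int\frac{\Delta\phi_1}{\phi_1}\eta^2$ does not directly close, so a different route is cleaner. Directly, with $\varepsilon=1$, we get $\lambda\int f'\eta^2\le\int|\nabla\eta|^2+\frac{A^2}{4}\int\eta^2$. That is the $\alpha\to1$ endpoint without the $(1-\alpha)^{-1}$ factor; it already implies \eqref{eq:p4} for every $\alpha\in(0,1)$, since
\begin{equation*}
\alpha\lambda\int f'\eta^2-\frac{\alpha A^2}{4(1-\alpha)}\int\eta^2\le \alpha\Big(\int|\nabla\eta|^2+\frac{A^2}{4}\int\eta^2\Big)-\frac{\alpha A^2}{4(1-\alpha)}\int\eta^2\le\int|\nabla\eta|^2 .
\end{equation*}
The last inequality uses $\alpha\le1$ and $\frac{1}{1-\alpha}\ge1$.

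If one prefers to mirror the stated form, take $\varepsilon=1-\alpha$ and write $\int|\nabla\eta|^2=\alpha\int|\nabla\eta|^2+(1-\alpha)\int|\nabla\eta|^2$. Either way the density argument finishes the proof: both sides are continuous in $H_0^1$, because $f'(u^\sharp_\lambda)$ is bounded, $u^\sharp_\lambda$ being classical. The delicate point is justifying the substitution near $\partial\Omega$, which is handled by working with $C_c^\infty$ test functions and then passing to the limit.
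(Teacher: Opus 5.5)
Your proof is correct, and it takes a genuinely different route from the paper's.

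\textbf{The paper's route.} The paper substitutes $\xi=\eta/(\eps+\phi_1)^{\alpha}$ with $\alpha<1$. This fractional ground-state transform produces an extra nonnegative term $\alpha(1-\alpha)\int_\Omega\bigl(|\nabla\phi_1|\eta/(\eps+\phi_1)\bigr)^2$. The drift is then handled crudely: the paper uses the pointwise bound $|{\bf v}\cdot\nabla\phi_1|\le|\nabla\phi_1|$, completes the square against that term, and lets $\eps\to0$.

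\textbf{Your route.} Your $w=\eta/\phi_1$ is exactly the paper's $\xi$ at the endpoint $\alpha=1$, $\eps=0$, where that Hardy-type term vanishes. You compensate by using $\nabla\cdot{\bf v}=0$ to move the derivative from $\phi_1$ onto $w$. The drift term is then absorbed by $\int_\Omega\phi_1^2|\nabla w|^2$ through Young's inequality.

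\textbf{What each buys.} The paper's argument never uses incompressibility, only $\|{\bf v}\|_\infty=1$, so it holds for arbitrary bounded drifts. It pays with the factor $(1-\alpha)^{-1}$ and cannot reach $\alpha=1$. Your argument needs $\nabla\cdot{\bf v}=0$, though not the no-penetration condition, since $w$ has compact support. In return it gives the stronger bound $\int_\Omega|\nabla\eta|^2\ge\lambda\int_\Omega f'(u^{\sharp}_\lambda)\eta^2-\frac{A^2}{4}\int_\Omega\eta^2$. The stated family for every $\alpha\in(0,1)$ follows from this, as you show, whereas optimizing the paper's family over $\alpha$ does not recover it. Your bound would also let one take $\beta=1$ in the proof of Lemma \ref{l:31}, which gives $\delta\int_\Omega f'(u)(\tilde f(u))^2\le\int_\Omega g(u)+\frac{A^2}{4\lambda}\int_\Omega(\tilde f(u))^2$ directly.

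\textbf{Minor points.} Your density argument with $C_c^\infty$ test functions cleanly replaces the paper's $\eps$-regularization near $\partial\Omega$. Hopf's lemma is not needed there. Also, $w$ is only $C^{2,\alpha}$ with compact support rather than smooth, which is enough for both integrations by parts. The asides about choosing $\eps=1-\alpha$ and the route that ``does not directly close'' are superfluous and can be dropped.
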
  
 Inequality \eqref{eq:p4} will play a pivotal role in the proof of Proposition \ref{p:1}. 
 A proof of this inequality is given in \cite[Lemma 2 and Theorem 2]{semistab}, but we provide it
 here for completeness. We also note that \eqref{eq:p4} was used in proving the regularity of the extremal solutions for problem \eqref{eq:i1} with singular nonlinearities \cite{LYZ}.

\begin{proof}[Proof of Lemma \ref{l:p2}]
Fix $\eps>0,$ 
 take an arbitrary $\eta \in H_0^1(\Omega)$ 
 and  set $\xi:= \frac{\eta }{(\eps+\phi_1)^{\alpha}} \in H_0^1(\Omega),$  where $\phi_1>0$ in $\Omega$ and normalized such that $||\phi_1||_{\infty}=1$ is the  eigenfunction corresponding to the  principal eigenvalue
of \eqref{eq:p2} in $\Omega.$ 
Observe that,
\begin{eqnarray}\label{eq:p5}
&&\int_{\Omega} |\nabla \eta |^2= \int_{\Omega} |\nabla \left( (\eps+\phi_a)^\alpha \xi) \right)|^2=\alpha^2 \int_{\Omega} (\eps+\phi_1)^{2(\alpha-1)} \xi^2 |\nabla \phi_1|^2+\nonumber\\
&& \alpha \int_{\Omega} (\eps+\phi_1)^{2\alpha-1} \nabla \phi_1 \cdot \nabla (\xi^2)+\int_{\Omega} (\eps+\phi_1)^{2\alpha} |\nabla \xi|^2\ge\\
&& \alpha^2 \int_{\Omega} (\eps+\phi_1)^{2(\alpha-1)} \xi^2 |\nabla \phi_1|^2+ \alpha \int_{\Omega} (\eps+\phi_1)^{2\alpha-1} \nabla \phi_1 \cdot \nabla (\xi^2). \nonumber
\end{eqnarray}
Inserting the definition of $\xi$ into  the first term in the right hand side of the expression above, we have
\begin{eqnarray}\label{eq:p6}
\int_{\Omega} |\nabla \eta |^2\ge \alpha^2 \int_{\Omega}\left(\frac{ |\nabla \phi_1| \eta}{\eps+\phi_1}\right)^2+\alpha \int_{\Omega} (\eps+\phi_1)^{2\alpha-1} \nabla \phi_1 \cdot \nabla (\xi^2).
\end{eqnarray}
Next, integrating by parts the second term on the right hand side of \eqref{eq:p6}  and taking into account that $\xi=0$ on $\partial \Omega,$ we obtain
\begin{eqnarray}\label{eq:p7}
&&\int_{\Omega} (\eps+\phi_1)^{2\alpha-1} \nabla \phi_1 \cdot \nabla (\xi^2)=\int_{\Omega} \nabla \cdot \left(  (\eps+\phi_1)^{2\alpha-1} \xi^2 \nabla \phi_1 \right)
-\int_{\Omega} \left(\nabla \cdot \left((\eps+\phi_1)^{2\alpha-1}  \nabla \phi_1 \right)\right)\xi^2=\nonumber \\
&& \int_{\partial \Omega}   (\eps+\phi_1)^{2\alpha-1} \xi^2 \nabla \phi_1 \cdot \nu-\int_{\Omega} \left(\nabla \cdot \left((\eps+\phi_1)^{2\alpha-1}  \nabla \phi_1 \right)\right)\xi^2=- \int_{\Omega} \left(\nabla \cdot \left((\eps+\phi_1)^{2\alpha-1}  \nabla \phi_1 \right)\right)\xi^2\\
&&= \int_{\Omega} (-\Delta \phi_1) (\eps+\phi_1)^{2\alpha-1} \xi^2
-(2\alpha-1) \int_{\Omega} (\eps+\phi_1)^{2(\alpha-1)} \xi^2 |\nabla \phi_1|^2. \nonumber 
 \end{eqnarray}
Inserting the definition of $\xi$ into the right hand side of the expression above, we have
\begin{eqnarray}\label{eq:p8}
\int_{\Omega} (\eps+\phi_1)^{2\alpha-1} \nabla \phi_1 \cdot \nabla (\xi^2)=\int_{\Omega} \frac{(-\Delta \phi_1)\eta^2}{\eps+\phi_1}
 -(2\alpha-1) \int_{\Omega} \left(\frac{|\nabla \phi_1|\eta}{\eps+\phi_1}\right)^2.
\end{eqnarray}
Combining  \eqref{eq:p6} and \eqref{eq:p8},
we have
\begin{eqnarray} \label{eq:p9}
\int_{\Omega} |\nabla \eta |^2\ge \alpha \int_{\Omega} \frac{(-\Delta \phi_1)\eta^2}{\eps+\phi_1}+\alpha(1-\alpha)  \int_{\Omega} \left(\frac{|\nabla \phi_1|\eta}{\eps+\phi_1}\right)^2.
\end{eqnarray}\label {eq:p10}
By  property {\bf iii)} above, that is $ -\Delta \phi_1+A{\bf v} \cdot \nabla \phi_1 -\lambda f^{\prime} (u^{\sharp}_{\lambda} ) \phi_1>0$ in $\Omega.$  Hence, \eqref{eq:p9} yields
\begin{eqnarray}\label{eq:p11}
\int_{\Omega} |\nabla \eta |^2 \ge \lambda \alpha \int_{\Omega} f^{\prime} (u^{\sharp}_{\lambda}) \left(\frac{ \phi_1}{\eps+\phi_1} \right) \eta^2-\alpha A\int_{\Omega}
\left(\frac{ {\bf v}\cdot \nabla \phi_1} {\eps+\phi_1} \right) \eta^2+\alpha(1-\alpha)  \int_{\Omega} \left(\frac{|\nabla \phi_1|\eta}{\eps+\phi_1}\right)^2.
\end{eqnarray}
 Using Cauchy-Schwartz inequality, the normalization $||{\bf v}||_{\infty}=1$ and   the expression above, we have
\begin{eqnarray}\label{eq:p12}
\int_{\Omega} |\nabla \eta |^2 \ge \lambda \alpha \int_{\Omega} f^{\prime} (u^{\sharp}_{\lambda}) \left(\frac{ \phi_1}{\eps+\phi_1} \right) \eta^2-\alpha |A|\int_{\Omega} \left(\frac{ |\nabla \phi_1|}{\eps+\phi_1} \right) \eta^2+\alpha(1-\alpha)  \int_{\Omega} \left(\frac{|\nabla \phi_1|\eta}{\eps+\phi_1}\right)^2.
\end{eqnarray}
Since,
\begin{eqnarray}\label{eq:p13}
-|A| \frac{|\nabla \phi_1|}{\eps+\phi_1} +(1-\alpha) \left( \frac{|\nabla \phi_1|}{\eps+\phi_1} \right)^2= 
\left( \sqrt{1-\alpha} \frac{|\nabla \phi_1|}{\eps+\phi_1} -\frac{|A|}{2\sqrt{1-\alpha}}\right)^2-\frac{A^2}{4(1-\alpha)},
\end{eqnarray}
we obtain from \eqref{eq:p12}
\begin{eqnarray}\label{eq:p14}
\int_{\Omega} |\nabla \eta |^2 \ge \lambda \alpha \int_{\Omega} f^{\prime} (u^{\sharp}_{\lambda}) \left(\frac{ \phi_1}{\eps+\phi_1} \right) \eta^2
-\frac{\alpha A^2}{4(1-\alpha)}
\int_{\Omega} \eta^2.
\end{eqnarray}
Since  $\eps$ is arbitrarily small,  we obtain \eqref{eq:p4} from \eqref{eq:p14} taking $\eps\to 0$.

\end{proof}

Another important inequality  needed for  the proof of Proposition \ref{p:1} is as follows.

\begin{lemma} \label{l:p3} Fix the reaction term $f$ and assume that (H1) holds. Then, there exists $\delta>0$ such that
\begin{eqnarray}
\int_{t_1}^{t_2} \left (f^{\prime}(s) \right)^2 ds \le (1-\delta) f^{\prime} (t_2) ( f(t_2)-f(t_1)) \quad \forall t_2>t_1\ge 0.
\end{eqnarray}
This result follows from  the  proofs  of \cite[Theorem 1.3]{GMN} and   \cite[Lemma 5.2]{GMN}.

\end{lemma}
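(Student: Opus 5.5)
We need to prove Lemma \ref{l:p3}: under (H1) there is $\delta>0$ with
\[
\int_{t_1}^{t_2}(f')^2\,ds\le(1-\delta)f'(t_2)\bigl(f(t_2)-f(t_1)\bigr)
\]
for all $t_2>t_1\ge0$.

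\emph{Step 1: the non-strict inequality and reduction to a ratio.} Since $f'$ is increasing, $(f')^2\le f'(t_2)f'$ on $[t_1,t_2]$. Integrating gives the inequality with $\delta=0$. So the content is a uniform strict gap, and I would study
\[
R(t_1,t_2):=\frac{\int_{t_1}^{t_2}(f')^2\,ds}{f'(t_2)\bigl(f(t_2)-f(t_1)\bigr)}<1 ,
\]
aiming to show $\sup R<1$ over $\{t_2>t_1\ge0\}$. Strict inequality for each fixed pair holds because $f''>0$ makes $f'$ strictly increasing.

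\emph{Step 2: reduction to $t_1=0$.} Put $g=f'$ and $h(t_1)=g(t_2)\bigl(f(t_2)-f(t_1)\bigr)-\int_{t_1}^{t_2}g^2\,ds$. Then $h'(t_1)=g(t_1)\bigl(g(t_1)-g(t_2)\bigr)<0$. Thus the deficit $h$ is largest at $t_1=0$ and decreases to $0$ as $t_1\to t_2$.

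The ratio is the quantity that matters, so I would instead compare the two sides as functions of $t_2$ with $t_1$ fixed. At $t_2=t_1$ both vanish. The derivative in $t_2$ of the left side is $g(t_2)^2$. The derivative of $(1-\delta)$ times the right side is $(1-\delta)\bigl[g(t_2)^2+g'(t_2)\bigl(f(t_2)-f(t_1)\bigr)\bigr]$. It therefore suffices to have
\[
\delta\,g(t_2)^2\le(1-\delta)\,g'(t_2)\bigl(f(t_2)-f(t_1)\bigr).
\]
This fails near $t_2=t_1$, so a direct pointwise comparison is too crude.

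\emph{Step 3: the main argument, via the function $f'(s)-f'(t_1)$.} I would handle the small-interval regime and the large-$t$ regime separately, in the spirit of \cite[Lemma 5.2]{GMN}. Writing $g=f'$ on $[t_1,t_2]$, the gap $1-R$ equals
\[
\frac{\int_{t_1}^{t_2} g\,\bigl(g(t_2)-g\bigr)\,ds}{g(t_2)\int_{t_1}^{t_2} g\,ds}.
\]

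\textbf{Short intervals.} Since $g''=f'''$ may not exist, I would use that $g'=f''$ is increasing. This makes $g$ convex, so $g(t_2)-g(s)\ge g'(s)(t_2-s)$, and also $g(t_2)-g(s)\ge (g(t_2)-g(t_1))\frac{t_2-s}{t_2-t_1}$. With $g\ge g(t_1)$, the numerator is at least comparable to $g(t_1)\bigl(g(t_2)-g(t_1)\bigr)(t_2-t_1)/2$. Convexity also gives $\int_{t_1}^{t_2} g\le (t_2-t_1)\,\frac{g(t_1)+g(t_2)}{2}$. Hence
\[
1-R\gtrsim\frac{g(t_1)\bigl(g(t_2)-g(t_1)\bigr)}{g(t_2)\bigl(g(t_1)+g(t_2)\bigr)}.
\]
This is a positive function of $r=g(t_2)/g(t_1)>1$, but it tends to $0$ as $r\to1$. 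So this crude bound alone is insufficient.

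\textbf{Using the weight $g(t_2)-g$.} A better route keeps the linear weight. On $[t_1,t_2]$, convexity of $g$ gives $g(s)\le$ the chord, so $g(t_2)-g(s)\ge\bigl(g(t_2)-g(t_1)\bigr)\frac{t_2-s}{t_2-t_1}$. Bounding $g\ge g(t_1)$ in the numerator and $g\le g(t_2)$ in the denominator gives
\[
1-R\ge\frac{g(t_1)}{g(t_2)}\cdot\frac{g(t_2)-g(t_1)}{2g(t_2)} .
\]
This is bounded below when $g(t_2)/g(t_1)$ is in a compact subset of $(1,\infty)$. For ratios near $1$, I would divide through by $g(t_2)-g(t_1)$ and note that the weights compare uniformly: the denominator $g(t_2)-g$ is comparable to its average times $(t_2-s)$ structure. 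The ratio of $\int g(g(t_2)-g)$ to $\bigl(g(t_2)-\bar g\bigr)\int g$ then stays positive.

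\textbf{Large ratios.} When $g(t_2)\gg g(t_1)$, i.e.\ large $t_2$, I would use the strict monotonicity of $f''$. It forces $g$ to be genuinely superlinear, hence the mass of $\int g^2$ is not concentrated near $t_2$, giving $R\le 1-\delta$.

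\emph{Main obstacle.} The hardest step is obtaining uniformity over all pairs, especially the two degenerate regimes $t_2\downarrow t_1$ and $t_2\to\infty$. My first attempt would be to track the argument of \cite[Theorem 1.3, Lemma 5.2]{GMN} exactly, since the paper states the lemma follows from those proofs. If the near-diagonal limit gives $R\to 1$ at rate proportional to $g(t_2)-g(t_1)$ rather than a fixed gap, I would show the limiting ratio equals $1/2+O(\cdot)$ by a Taylor expansion with $g$ increasing and $g'$ increasing. That limit is strictly less than $1$, which closes the argument.
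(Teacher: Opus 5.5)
The step you flag as the main obstacle cannot be fixed by better estimates. In the near-diagonal regime the inequality is false, so no argument can produce a uniform $\delta$. Your own formula for the gap shows this. Since $g=f'$ is increasing, $g(t_2)-g(s)\le g(t_2)-g(t_1)$ on $[t_1,t_2]$, so
\[
1-R(t_1,t_2)=\frac{\int_{t_1}^{t_2} g\,\bigl(g(t_2)-g\bigr)\,ds}{g(t_2)\int_{t_1}^{t_2} g\,ds}\le 1-\frac{f'(t_1)}{f'(t_2)}\longrightarrow 0\qquad (t_2\downarrow t_1).
\]
This holds for every $t_1>0$, and also for $t_1=0$ whenever $f'(0)>0$. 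Carrying out the Taylor expansion you propose gives $1-R=\frac{f''(t_1)}{2f'(t_1)}(t_2-t_1)+o(t_2-t_1)$. The quantity whose limit is $\tfrac12$ is the normalized gap $(1-R)/\bigl(1-f'(t_1)/f'(t_2)\bigr)$, not $R$ itself. A positive limit for that normalized gap (like the positivity you assert after ``dividing through by $g(t_2)-g(t_1)$'') says exactly that $1-R\to0$ linearly. It confirms the failure rather than closing it. Concretely, take $f(s)=e^s$, which satisfies (H1), with $t_1=0$ and $t_2=t$. The left side divided by the right side is $\frac{1+e^{-t}}{2(1-\delta)}$, which tends to $\frac{1}{1-\delta}>1$ as $t\downarrow 0$. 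So the lemma as literally stated, uniformly over all $t_2>t_1\ge 0$, is false. The paper offers only a citation to GMN here, and your proposal cannot be completed as planned.

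What is true, and all that Lemma \ref{l:31} needs, is the inequality away from the diagonal, and your ingredients nearly give it. Your chord bound $1-R\ge\frac{r-1}{2r^2}$, with $r=f'(t_2)/f'(t_1)$, is correct and gives $1-R\ge\frac{r-1}{8}$ for $r\in(1,2]$. Your ``large ratios'' paragraph is only a heuristic, but for $r\ge 2$ convexity of $f'$ alone suffices. Choose $s_0\in[t_1,t_2]$ with $g(s_0)=g(t_2)/2$. On $[t_1,s_0]$ one has $g(t_2)-g\ge g(t_2)/2$. On $[s_0,t_2]$ the chord gives $g(t_2)-g(s)\ge \frac{g(t_2)}{2}\cdot\frac{t_2-s}{t_2-s_0}$, together with $g\ge g(t_2)/2$. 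These combine to $1-R\ge\frac18$. Hence the inequality holds with $\delta=\frac18\min\{1,\kappa\}$ whenever $f'(t_2)\ge(1+\kappa)f'(t_1)$, in particular for $t_1=0$ and $t_2\ge T$. In Lemma \ref{l:31} this turns \eqref{eq:m6} into $\tilde f(u)\int_0^u (f')^2\,ds\le(1-\delta)f'(u)\tilde f(u)^2+C$, and the extra constant is harmless in \eqref{eq:m9}. This off-diagonal statement is the one you should aim to prove.
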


Next,  we will need the following two results to prove the uniqueness of a classical solution for \eqref{eq:i1} with $\lambda=\lambda^*$. These results
are direct adaptations of Lemmas 2.1 and 2.2  from \cite{Martel}.

\begin{lemma} \label{l:p4}
Assume that (H1) and (H2) hold and a problem
\begin{eqnarray}
\left\{
\begin{array}{lll}
-\Delta w+A {\bf v} \cdot \nabla w=\tilde \lambda f(w)+\eps & \mbox{in} & \Omega, \\
w=0 & \mbox{on} & \partial \Omega,
\end{array}
\right.
\end{eqnarray}
admits a classical solution for some $\tilde \lambda>0$ and $\eps>0.$ Then, problem \eqref{eq:i1} admits a classical solution for $\lambda=\tilde \lambda (1+\gamma)$ for some
$\gamma>0.$

\end{lemma}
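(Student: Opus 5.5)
Following Lemma 2.1 of \cite{Martel}, the plan is to build a super-solution of \eqref{eq:i1} at the larger parameter $\lambda=\tilde\lambda(1+\gamma)$ by composing $w$ with a suitable function, and then to invoke Lemma \ref{l:p1}. The composition works because the advection term transforms like a first-order operator under the chain rule, so it causes no additional difficulty.

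For $\gamma>0$, to be fixed later, we define $\Phi_\gamma$ as the solution of the ODE
\begin{eqnarray*}
\Phi_\gamma^{\prime}(s)=\frac{(1+\gamma) f(\Phi_\gamma(s))}{f(s)+\eps/\tilde\lambda}, \qquad \Phi_\gamma(0)=0,
\end{eqnarray*}
which equivalently reads $\int_0^{\Phi_\gamma(s)} \frac{dt}{(1+\gamma)f(t)}=\int_0^s \frac{dt}{f(t)+\eps/\tilde\lambda}$. We then set $\bar u:=\Phi_\gamma(w)$.

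\begin{enumerate}
\item We verify that $\bar u$ is well defined on the range $[0,\|w\|_\infty]$ of $w$ (note $w>0$ by the maximum principle). The function $G(\tau)=\int_0^\tau dt/f(t)$ is increasing. Since $f$ is positive, convex and increasing, (H1) gives $G(\infty)<\infty$, and in any case $G$ is finite on bounded sets. We therefore need
\begin{eqnarray*}
\int_0^{M} \frac{dt}{f(t)+\eps/\tilde\lambda}<\frac{G(\infty)}{1+\gamma}, \qquad M=\|w\|_\infty .
\end{eqnarray*}
Because $\eps>0$, the left-hand side is strictly less than $\int_0^M dt/f(t)=G(M)\le G(\infty)$. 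Hence the inequality holds for all sufficiently small $\gamma>0$, and $\Phi_\gamma$ is $C^2$ on $[0,M]$.
\item We check that $\Phi_\gamma$ is increasing and convex. Monotonicity is immediate from the ODE. For convexity, differentiate $\log\Phi^{\prime}$:
\begin{eqnarray*}
\frac{\Phi^{\prime\prime}}{\Phi^{\prime}}=\frac{f^{\prime}(\Phi)\Phi^{\prime}}{f(\Phi)}-\frac{f^{\prime}(s)}{f(s)+\eps/\tilde\lambda}.
\end{eqnarray*}
Since $\Phi(s)\ge s$ (because $\Phi^{\prime}\ge1$ whenever $\Phi\ge s$, starting from $\Phi(0)=0$), we have $f^{\prime}(\Phi)\ge f^{\prime}(s)$. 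Combined with $\Phi^{\prime}/f(\Phi)=(1+\gamma)/(f(s)+\eps/\tilde\lambda)$, this gives $\Phi^{\prime\prime}\ge0$.
\item We compute the operator applied to $\bar u$. By the chain rule,
\begin{eqnarray*}
-\Delta\bar u+A{\bf v}\cdot\nabla\bar u=\Phi^{\prime}(w)\left(-\Delta w+A{\bf v}\cdot\nabla w\right)-\Phi^{\prime\prime}(w)|\nabla w|^2 .
\end{eqnarray*}
Using the equation for $w$ and the ODE, the first term equals $\Phi^{\prime}(w)(\tilde\lambda f(w)+\eps)=\tilde\lambda(1+\gamma)f(\bar u)$. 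The second term is nonpositive by convexity. We then need a sign check at this point, since it bears on whether $\bar u$ is a super- or sub-solution. We also have $\bar u=0$ on $\partial\Omega$.
\end{enumerate}

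The main obstacle is step~3. A convex $\Phi$ yields a sub-solution, not a super-solution, so the construction must be reversed as in \cite{Martel}. Concretely, we take the inverse-type transform with the roles of $f$ and $f+\eps/\tilde\lambda$ swapped, i.e.\ we pick $\Psi$ concave with
\begin{eqnarray*}
\Psi^{\prime}(s)=\frac{(1+\gamma)f(\Psi(s))}{\tilde\lambda^{-1}\eps+f(s)}\quad\mbox{replaced by}\quad \int_0^{\Psi(s)}\frac{dt}{(1+\gamma)f(t)}=\int_0^s\frac{dt}{f(t)+\eps/\tilde\lambda}\ \mbox{with}\ \Psi\le s .
\end{eqnarray*}
Here $\Psi\le s$ holds for small $\gamma$ because $(1+\gamma)f(t)\le f(t)+\eps/\tilde\lambda$ on $[0,M]$ once $\gamma\le \eps/(\tilde\lambda f(M))$. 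Redoing the log-derivative computation with $\Psi\le s$ gives $\Psi^{\prime\prime}\le0$, so $-\Psi^{\prime\prime}|\nabla w|^2\ge0$. Then $\bar u=\Psi(w)>0$ satisfies $-\Delta\bar u+A{\bf v}\cdot\nabla\bar u\ge\tilde\lambda(1+\gamma)f(\bar u)$, which makes it a classical super-solution. Lemma \ref{l:p1} then gives a classical (minimal) solution of \eqref{eq:i1} at $\lambda=\tilde\lambda(1+\gamma)$.

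The delicate points are choosing $\gamma$ small so that $\Psi$ is defined, positive and $C^2$ on $[0,M]$, and confirming the concavity sign. Both rely only on $f$ being positive and increasing, together with $\eps>0$ and the boundedness of $w$.
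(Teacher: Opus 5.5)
There is a genuine gap. Your final step depends on $\Psi$ being concave, and that is false in general. Note first that your $\Psi$ is defined by exactly the same equation as $\Phi_\gamma$, so nothing was actually reversed. Also, the claim $\Phi_\gamma\ge s$ in step 2 is wrong for small $\gamma$, since $\Phi_\gamma'(0)=(1+\gamma)f(0)/(f(0)+\eps/\tilde\lambda)<1$. Now redo the log-derivative correctly, using $\Psi'/f(\Psi)=(1+\gamma)/(f(s)+\eps/\tilde\lambda)$:
\[
\frac{\Psi''(s)}{\Psi'(s)}=\frac{(1+\gamma)f'(\Psi(s))-f'(s)}{f(s)+\eps/\tilde\lambda}.
\]
The bound $\Psi\le s$ only gives $f'(\Psi)\le f'(s)$, not the needed $(1+\gamma)f'(\Psi)\le f'(s)$. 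At $s=0$ the numerator equals $\gamma f'(0)$, which is positive whenever $f'(0)>0$, e.g. for $f(u)=e^u$. So $\Psi''>0$ on some interval $[0,s_0)$. Since $w$ takes all values in $(0,s_0)$ in a strip along $\partial\Omega$ where $|\nabla w|>0$ (Hopf's lemma), the term $-\Psi''(w)|\nabla w|^2$ is strictly negative there. Hence $\bar u=\Psi(w)$ violates the super-solution inequality near the boundary, and the argument does not close.

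The repair is immediate and makes any change of variables unnecessary. You already wrote that $(1+\gamma)f(t)\le f(t)+\eps/\tilde\lambda$ on $[0,M]$ once $\gamma\le \eps/(\tilde\lambda f(M))$. This says that $w$ itself satisfies $-\Delta w+A{\bf v}\cdot\nabla w=\tilde\lambda f(w)+\eps\ge \tilde\lambda(1+\gamma)f(w)$. Since $w>0$ in $\Omega$ by the maximum principle, Lemma \ref{l:p1} applies directly.

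The paper uses a different construction, adapted from Martel. It sets $z=(1+\gamma)w-\beta\chi$ with $\chi$ solving \eqref{chi} and takes $\beta\le\eps$, so the additive $\eps$ absorbs $-\beta$. It then uses Hopf's lemma to choose $\beta,\gamma$ with $0<z<w$, and monotonicity of $f$ makes $z$ a strict super-solution at $\tilde\lambda(1+\gamma)$. Both that argument and the direct one rely on $w$ being classical (bounded, with Hopf-type boundary behaviour). The nonlinear transform gains nothing here and, as written, gets the sign wrong.
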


\begin{proof} 
Let $\chi$ be the solution of
\begin{equation}\label{chi}
\left\{  
\begin{aligned}
&-\Delta \chi +A{\bf v}\cdot \nabla \chi =1 &&\text{in }\Omega, \\
&\chi=0 &&\text{on } \partial\Omega.
\end{aligned}
\right.
\end{equation}
By Hopf's  Lemma, see e.g  \cite[Section 6.4.2 ]{Evans}, there exists $\beta_{0}>0$ such that $w-\beta\chi>0$  in $\Omega,$ 
 $\forall \beta\in (0,\beta_{0}).$ 
 Set
\begin{equation}
			z:=(1+\gamma)w-\beta\chi.
\end{equation}
Then,
\begin{equation}\label{eq:T1}
\left\{  
\begin{aligned}
&-\Delta z+A{\bf v}\cdot \nabla z=\tilde{\lambda}(1+\gamma)f(w)+(1+\gamma)\eps-\beta &&\text{in } \Omega, \\
&z=0 &&\text{on } \partial\Omega.
\end{aligned}
\right.
\end{equation}
Choosing $\beta= \min\{\frac{\beta_{0}}{2},\eps\}$, we have from \eqref{eq:T1}
\begin{equation}
\left\{  
\begin{aligned}
&-\Delta z+A{\bf v}\cdot \nabla z > \tilde{\lambda}(1+\gamma)f(w) &&\text{in } \Omega, \\
&z=0 &&\text{on } \partial\Omega.
\end{aligned}
\right.
\end{equation}
Using  Hopf's Lemma again, we chose  $\gamma>0$ such that $\gamma w-{\beta} \chi<0$ in $\Omega$.
With this choice of the parameters $\beta, \gamma >0$ we have  that $0<z<w$ in $\Omega$. 
Since $f$ is an increasing function with $f(0)>0$, we have
\begin{equation}
\left\{  
\begin{aligned}
&-\Delta z+A{\bf v}\cdot \nabla z >  \tilde{\lambda}(1+\gamma)f(w)>\tilde{\lambda}(1+\gamma)f(z) &&\text{in } \Omega, \\
&z=0 &&\text{on } \partial\Omega.
\end{aligned}
\right.
\end{equation}
Therefore, $z$ is a bounded classical super solution for the following problem  
\begin{equation}
\left\{  
\begin{aligned}
&-\Delta u+A{\bf v}\cdot \nabla u = \tilde{\lambda}(1+\gamma)f(u) &&\text{in } \Omega, \\
&u=0 &&\text{on } \partial\Omega,
\end{aligned}
\right.
\end{equation}
which implies the existence of a classical solution for \eqref{eq:i1} with $\lambda=\tilde \lambda(1+\gamma),$ thanks to Lemma \ref{l:p1}.
	\end{proof}

\begin{lemma}\label{l:p5}
 Assume that $\bar w$ is a classical super-solution of \eqref{eq:i1} with $\lambda=\lambda^*.$ That is,
\begin{eqnarray}\label{eq:T2}
\left\{
\begin{array}{lll}
-\Delta\bar  w+A {\bf v} \cdot \nabla \bar w \ge  \lambda^*  f(\bar w) & \mbox{in} & \Omega, \\
\bar w=0 & \mbox{on} & \partial \Omega,
\end{array}
\right.
\end{eqnarray}
Then, $\bar w$ solves,
\begin{eqnarray}
\left\{
\begin{array}{lll}
-\Delta\bar  w+A {\bf v} \cdot \nabla \bar w =  \lambda^*  f(\bar w) & \mbox{in} & \Omega, \\
\bar w=0 & \mbox{on} & \partial \Omega.
\end{array}
\right.
\end{eqnarray}
\end{lemma}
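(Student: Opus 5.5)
Following Lemma 2.2 of \cite{Martel}, we argue by contradiction. Suppose $\bar w$ satisfies \eqref{eq:T2} but is not a solution. Then the defect
\[
g:=-\Delta \bar w+A{\bf v}\cdot\nabla \bar w-\lambda^* f(\bar w)\ge 0
\]
is continuous and not identically zero in $\Omega$. The aim is to turn this strict slack into a classical solution of the $\eps$-perturbed problem of Lemma \ref{l:p4}, with $\tilde\lambda=\lambda^*$ and some $\eps>0$. Lemma \ref{l:p4} then yields a classical solution of \eqref{eq:i1} for $\lambda=\lambda^*(1+\gamma)>\lambda^*$, contradicting property {\bf i)}.

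To produce this perturbed solution, first solve the linear problem
\[
-\Delta \zeta+A{\bf v}\cdot\nabla\zeta=g \ \text{ in }\Omega,\qquad \zeta=0 \ \text{ on }\partial\Omega .
\]
By the strong maximum principle and Hopf's lemma, $\zeta>0$ in $\Omega$ with $\partial_\nu\zeta<0$ on $\partial\Omega$, so $\zeta\ge c\,\chi$ for some $c>0$, where $\chi$ is defined in \eqref{chi}. The function $\bar w-\zeta$ solves the equation with equality; this serves as motivation only. We then look for a supersolution of the $\eps$-problem lying between $0$ and $\bar w$ by the convexity trick of \cite{Martel}: set $w_t:=\bar w-t\zeta$ for small $t$, or alternatively use a combination with $u^\sharp_\lambda$. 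For the $\eps$-problem, a supersolution is a function $z$ with $-\Delta z+A{\bf v}\cdot\nabla z\ge \lambda^* f(z)+\eps$, and Lemma \ref{l:p1} (Sattinger iteration, which applies equally to the right-hand side $\lambda^* f(u)+\eps$) converts it into a classical solution.

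A cleaner route uses the minimal solutions. For $\lambda<\lambda^*$, the minimal solution $u^\sharp_\lambda$ lies below every supersolution at level $\lambda$, and $\bar w$ is such a supersolution. So $u^\sharp_\lambda\le\bar w$ for all $\lambda<\lambda^*$, hence $u^*\le\bar w$, and $u^*$ is bounded. Then we use the degeneracy argument. Let $\phi_\lambda>0$ be the principal eigenfunction of \eqref{eq:p2} with eigenvalue $\mu_1(\lambda)>0$, and let $\phi^\dagger$ be the principal eigenfunction of the adjoint operator, in which ${\bf v}\cdot\nabla$ is replaced by $-{\bf v}\cdot\nabla$ since $\nabla\cdot{\bf v}=0$ and ${\bf v}\cdot\nu=0$. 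The difference $\bar w-u^\sharp_\lambda\ge0$ satisfies
\[
(-\Delta+A{\bf v}\cdot\nabla)(\bar w-u^\sharp_\lambda)=g+\lambda^* f(\bar w)-\lambda f(u^\sharp_\lambda)\ge g+\lambda f'(u^\sharp_\lambda)(\bar w-u^\sharp_\lambda),
\]
where the inequality uses the convexity of $f$. Testing against $\phi^\dagger_\lambda$ gives
\[
\mu_1(\lambda)\int(\bar w-u^\sharp_\lambda)\phi^\dagger_\lambda\ \ge\ \int g\,\phi^\dagger_\lambda .
\]
This does not immediately contradict anything, so instead we follow Martel directly. We want a strict supersolution with uniform slack. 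Take $z=\bar w-t\zeta+t\zeta/2$? That is circular. The correct Martel step is as follows. Since $g\not\equiv 0$, $\zeta\ge c\chi$ by Hopf, so for the $\eps$-problem we use the supersolution $z_t=(1-t)\bar w+t\,u^\sharp_{\lambda}$, with $\lambda$ close to $\lambda^*$, or more directly $\bar w-s\chi$. We compute
\[
-\Delta(\bar w-s\chi)+A{\bf v}\cdot\nabla(\bar w-s\chi)=\lambda^* f(\bar w)+g-s\ge \lambda^* f(\bar w-s\chi)+g-s .
\]
This does not give uniform slack near points where $g=0$. Therefore we first smooth the slack via $\zeta$. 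The function $\bar w-\tfrac12\zeta$ satisfies
\[
-\Delta(\bar w-\tfrac12\zeta)+A{\bf v}\cdot\nabla(\bar w-\tfrac12\zeta)=\lambda^* f(\bar w)+\tfrac12 g\ \ge\ \lambda^* f(\bar w-\tfrac12\zeta)+\lambda^* f'(\bar w-\zeta)\tfrac12\zeta ,
\]
where the last step uses the mean value theorem and $f'$ increasing. Since $f'>0$ and $\zeta\ge c\chi$, the slack $\lambda^*f'(\cdot)\zeta/2$ is at least $c'\chi$. Subtracting $s\chi$ then does not help at the boundary, which is exactly why Lemma \ref{l:p4} is phrased with the supersolution trick. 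The plan is therefore to iterate: replace $\bar w$ by $\bar w_1:=\bar w-\tfrac12\zeta$, which is a supersolution with slack $\ge c'\chi$. Then $\bar w_2:=\bar w_1-\tfrac{c'}{2}\,\chi\cdot(\text{normalized})$ receives slack from $f'\cdot\chi$ plus a constant contribution coming from $-\Delta\chi=1$ in the opposite direction.

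The main obstacle is this interior-versus-boundary slack issue: obtaining a constant $\eps$ of slack rather than a slack that vanishes on $\partial\Omega$. Our first attempt is Martel's argument directly. Let $\tilde w$ be the minimal solution of the equation with right-hand side $\lambda^* f(u)+\tfrac12 g$, which exists since $\bar w$ is a supersolution for it. Linearizing at $\tilde w$ and using the positivity of the adjoint eigenfunction, we show the principal eigenvalue there is positive. The implicit function theorem then gives solutions for right-hand side $\lambda f(u)+\tfrac12 g$ with $\lambda$ slightly above $\lambda^*$. These are supersolutions of \eqref{eq:i1} at $\lambda>\lambda^*$, contradicting property {\bf i)} through Lemma \ref{l:p1}.
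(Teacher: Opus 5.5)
Your last paragraph is a correct proof, but it takes a different route from the paper, and it should be separated from everything before it. The test against $\phi^\dagger_\lambda$ at $u^\sharp_\lambda$ is inconclusive, as you note yourself. The attempt with $\bar w-s\chi$ goes in the wrong direction. The iteration from $\bar w-\tfrac12\zeta$ relies on unverified positivity of $\bar w-\tfrac12\zeta$ and again subtracts $\chi$. All of this can be deleted.

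What must be written out is the nondegeneracy step, which you only assert. Let $L:=-\Delta+A\mathbf{v}\cdot\nabla$ and let $0\le\tilde w\le\bar w$ be the minimal solution of $L\tilde w=\lambda^*f(\tilde w)+\tfrac12 g$. Convexity gives $(L-\lambda^*f'(\tilde w))(\bar w-\tilde w)\ge\tfrac12 g$. Testing against the positive adjoint principal eigenfunction $\phi^\dagger$ yields $\mu_1\int_\Omega(\bar w-\tilde w)\phi^\dagger\ge\tfrac12\int_\Omega g\,\phi^\dagger>0$, which forces $\mu_1>0$ because $\bar w-\tilde w\ge 0$.

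You should then explain three further points:
\begin{enumerate}
\item Why $\mu_1>0$ makes the linearized operator invertible from $\{h\in C^{2,\alpha}(\bar\Omega): h=0 \mbox{ on } \partial\Omega\}$ onto $C^{0,\alpha}(\bar\Omega)$: positivity of $\mu_1$ gives the maximum principle, hence a trivial kernel, and the operator is Fredholm of index zero.
\item Why the continued solutions $u_\lambda$ are positive: $Lu_\lambda=\lambda f(u_\lambda)+\tfrac12 g>0$, so Lemma \ref{l:p1} applies and contradicts property i).
\item Regularity of $g$: the implicit function theorem needs $g\in C^{0,\alpha}(\bar\Omega)$. If $\bar w$ is only in $C^2(\Omega)\cap C(\bar\Omega)$, replace $g$ by a smooth compactly supported $h$ with $0\le h\le g$, $h\not\equiv 0$; nothing else changes.
\end{enumerate}

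The paper resolves exactly the interior-versus-boundary obstacle you identified with one explicit comparison function. Using your $\zeta$ and $\chi$, it sets $z:=\bar w+\eps\chi-\beta\zeta$, so that $Lz=\lambda^*f(\bar w)+(1-\beta)g+\eps\ge\lambda^*f(\bar w)+\eps$. It chooses $\beta\in(0,1)$ small so that $\bar w-\beta\zeta>0$, and then $\eps$ small so that $\eps\chi<\beta\zeta$; the second choice is exactly where your Hopf bound $\zeta\ge c\chi$ enters. This gives $0<z<\bar w$, hence $Lz\ge\lambda^*f(z)+\eps$, and Lemmas \ref{l:p1} and \ref{l:p4} produce a classical solution above $\lambda^*$.

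The idea you were missing is that uniform slack comes from \emph{adding} $\eps\chi$, since $L\chi=1$. The resulting increase of $z$ is paid for by subtracting $\beta\zeta$, whose cost $\beta g$ is absorbed by the defect itself.

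As for what each route buys: the paper's proof uses only the maximum principle, Hopf's lemma and monotone iteration, and it is the direct analogue of Martel's argument, which also works for weak supersolutions. Your route bypasses the $\eps$-problem and Lemma \ref{l:p4} entirely. The price is the principal-eigenvalue theory for a non-self-adjoint operator, Fredholm theory and the implicit function theorem in H\"older spaces, which need classical regularity up to the boundary. In exchange, it produces a whole branch of solutions of the forced problem beyond $\lambda^*$.
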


\begin{proof} Assume   \eqref{eq:T2} holds. Then, there exists  $g\in C^2(\Omega) \cap C(\bar \Omega),$ $g \geq 0$ and $g \not\equiv 0$  in $\Omega$ such that
\begin{equation}
-\Delta \bar{w}+A{\bf v}\cdot \nabla \bar{w} = \lambda^{\ast}f(\bar{w})+g.
\end{equation}
Let $\zeta$ be the solution of
\begin{equation}
\left\{  
\begin{aligned}
&-\Delta \zeta + A{\bf v}\cdot \nabla \zeta=g &&\text{in } \Omega, \\
&\zeta=0 &&\text{on } \partial\Omega,
\end{aligned}
\right.
\end{equation}
and  $\chi$ be  the  solution of  (\ref{chi}).  Arguing as in the previous lemma, we can choose two constants $0<\beta<1$ and $\eps >0$ such that
$\bar w-\beta\zeta>0$ and $\eps \chi-\beta  \zeta <0$ in $\Omega.$
Set
\begin{equation}
z:=\bar w+\eps \chi-\beta\zeta,
\end{equation}
and observe that
\begin{equation}\label{eq:T3}
\left\{  
\begin{aligned}
&-\Delta z+A{\bf v}\cdot \nabla z=\lambda^{\ast}f(\bar{w})+g+\eps-\beta g \ge \lambda^{\ast}f(\bar{w})+\eps&&\text{in } \Omega, \\
&z=0 &&\text{on } \partial\Omega,
\end{aligned}
\right.
\end{equation}
where the last inequality holds as $g\ge 0$ and $\beta<1$. By our choice of the parameters $\beta$ and $\eps$ we have that $0<z<\bar w$ in $\Omega.$  Hence, by \eqref{eq:T3} 
we have
\begin{equation}
\left\{  
\begin{aligned}
&-\Delta z+A{\bf v}\cdot \nabla z \ge \lambda^{\ast}f(z)+\eps &&\text{in } \Omega, \\
&z=0 &&\text{on } \partial\Omega.
\end{aligned}
\right.
\end{equation}
Therefore, by Lemma \ref{l:p1} there exists $\tilde z$ a classical solution of
 \begin{equation}
\left\{  
\begin{aligned}
&-\Delta \tilde z+A{\bf v}\cdot \nabla \tilde z=  \lambda^{\ast}f(\tilde z)+\eps &&\text{in } \Omega, \\
&z=0 &&\text{on } \partial\Omega.
\end{aligned}
\right.
\end{equation}
By Lemma \ref{l:p4} we thus have that there is $\gamma>0$ such that 
		
\begin{equation}
\left\{  
\begin{aligned}
&-\Delta u+A{\bf v}\cdot \nabla u = \lambda^{\ast}(1+\gamma)f(u) &&\text{in } \Omega, \\
&u=0 &&\text{on } \partial\Omega,
\end{aligned}
\right.
\end{equation}
admits a classical solution which contradicts the definition of $\lambda^*.$
\end{proof}

\section{Properties of extremal solutions: proof of Proposition \ref{p:1}}\label{s:3}

In this section, we present a proof of Proposition \ref{p:1} and describe some additional properties of the extremal solutions for \eqref{eq:i1}. 
The following result shows that the extremal solutions are classical.
\begin{lemma} \label{l:31}
 Fix the domain $\Omega \in \mathbb{R}^2$, the reaction function $f$ satisfying (H1) and normalized such that $f(0)=1$, the flow ${\bf v}$ satisfying (H2) and normalized such that $||{\bf v}||_{\infty}=1,$
and an amplitude of the flow $A.$ Then, the extremal solution $u^*$  of \eqref{eq:i1} defined  in \eqref{eq:i2} is $C^{2,\alpha} (\bar \Omega).$
\end{lemma}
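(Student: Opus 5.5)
The plan is to adapt the Crandall--Rabinowitz / Nedev-type argument, as refined in \cite{GMN}, to the non-variational operator. Lemma \ref{l:p2} replaces the semi-stability inequality, and Lemma \ref{l:p3} supplies the growth condition on $f$. The goal is a bound on $\lambda f(u^{\sharp}_\lambda)$ in some $L^p(\Omega)$, $p>1$, that is uniform in $\lambda\in(\lambda^*(A)/2,\lambda^*(A))$. Since $n=2$, elliptic $W^{2,p}$ estimates for $-\Delta+A{\bf v}\cdot\nabla$ (with bounded drift) then give a uniform $L^\infty$ bound on $u^{\sharp}_\lambda$. A bootstrap through $C^{1,\alpha}$ and Schauder estimates then yields uniform $C^{2,\alpha}(\bar\Omega)$ bounds. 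By the monotonicity in property \textbf{ii)} and Arzel\`a--Ascoli, the limit $u^*$ is then a $C^{2,\alpha}$ classical solution at $\lambda^*$.

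For the key estimate, fix $u=u^{\sharp}_\lambda$ and test Lemma \ref{l:p2} with $\eta=\tilde f(u):=f(u)-f(0)=f(u)-1\in H^1_0(\Omega)$. This gives
\begin{equation*}
\alpha\lambda\int_\Omega f'(u)\tilde f(u)^2\le \int_\Omega f'(u)^2|\nabla u|^2+\frac{\alpha A^2}{4(1-\alpha)}\int_\Omega \tilde f(u)^2 .
\end{equation*}
To handle the gradient term, I set $g(t)=\int_0^t f'(s)^2\,ds$ and multiply the equation by $g(u)\in H^1_0$. The drift term integrates to zero: $\int {\bf v}\cdot\nabla u\, g(u)=\int {\bf v}\cdot\nabla G(u)=0$, using $\nabla\cdot{\bf v}=0$ and ${\bf v}\cdot\nu=0$. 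Hence
\begin{equation*}
\int_\Omega f'(u)^2|\nabla u|^2=\lambda\int_\Omega f(u)g(u).
\end{equation*}
Lemma \ref{l:p3} with $t_1=0$ gives $g(u)\le(1-\delta)f'(u)\tilde f(u)$. Choosing $\alpha$ with $1-\delta<\alpha<1$, I obtain
\begin{equation*}
\lambda(\alpha-(1-\delta))\int_\Omega f'(u)\tilde f(u)^2\le \lambda(1-\delta)\int_\Omega f'(u)\tilde f(u)+C_A\int_\Omega \tilde f(u)^2 .
\end{equation*}
Both terms on the right are lower order than the left, since $f'\to\infty$ by (H1). Splitting $\Omega$ into $\{u\le M\}$ and $\{u>M\}$ with $M$ large, and using $\lambda>\lambda_0$ from property \textbf{iv)}, absorbs them and gives a uniform bound on $\int f'(u)\tilde f(u)^2$. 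This also controls $\int f(u)^2$, so $f(u)\in L^2$ uniformly, which is enough in dimension two.

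The main obstacle is the absorption of the $A^2$ term. Unlike the case $A=0$, it does not carry a factor of $\lambda$, so the argument must use that $f'(u)\to\infty$ on the region where $\tilde f(u)$ is large. On $\{u\le M\}$ everything is bounded by a constant depending on $M$, $A$ and $|\Omega|$. On $\{u>M\}$ I will pick $M$ with $\lambda_0(\alpha-1+\delta)f'(M)/2>C_A+\lambda_\infty(1-\delta)/\tilde f(M)$. A minor technical point is justifying that $\tilde f(u)$ and $g(u)$ are admissible test functions; this holds because $u^{\sharp}_\lambda$ is classical and vanishes on $\partial\Omega$. Once the uniform $L^2$ bound on $f(u^{\sharp}_\lambda)$ is in hand, the regularity bootstrap and the passage to the limit are standard.
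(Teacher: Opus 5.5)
Your proposal is correct and follows essentially the paper's own argument. It uses the same test function $\eta=\tilde f(u)$ in Lemma \ref{l:p2}, the same multiplier $g(u)$ with the drift integrating out, Lemma \ref{l:p3} with the parameter chosen in $(1-\delta,1)$, absorption of the lower-order terms where $u$ is large, and the uniform $L^2$ bound on $f(u)$ bootstrapped to $C^{2,\alpha}$ in dimension two; your explicit level-$M$ splitting and Arzel\`a--Ascoli passage to the limit simply make precise the step the paper states informally.

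Your stated condition on $M$ needs a small fix. After dividing by $\tilde f^2$, the middle term is $\lambda(1-\delta)f'(u)/\tilde f(u)$, not $\lambda_\infty(1-\delta)/\tilde f(M)$. So take $M$ with $(1-\delta)/\tilde f(M)\le(\alpha-1+\delta)/4$ and $\tfrac{\lambda^*}{2}\cdot\tfrac{\alpha-1+\delta}{4}\,f'(M)\ge C_A$. This uses $\lambda>\lambda^*/2$, which is the lower bound actually available, rather than $\lambda>\lambda_0$.
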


\begin{proof}
The proof  follows a general strategy of \cite[Theorem 1]{Nedev}. 
Let $\lambda\in (0,\lambda^*),$ set $u:=u_{\lambda}^{\sharp}$  and
\begin{eqnarray}\label{eq:m1}
\tilde f(u)=f(u)-1, \quad g(u)=\int_0^u \left ( f^{\prime} (s) \right)^2 ds.
\end{eqnarray}
Multiplying \eqref{eq:i1} by $g(u)$ and integrating the result by parts and taking into account incompressibility and no-penetration conditions on ${\bf v}$ (see (H2)) 
we have
\begin{eqnarray}\label{eq:m2a}
\int_{\Omega} \left(f^{\prime} (u) |\nabla u|\right)^2=\lambda \int_{\Omega} \tilde f(u) g(u) +\lambda \int_{\Omega} g(u).
\end{eqnarray}
By inequality  \eqref{eq:p4}  in Lemma \ref{l:31} with  $\eta=\tilde f(u),$   we have that for any $\beta \in (0,1)$  the following holds 
\begin{eqnarray}\label{eq:m3a}
\int_{\Omega} \left(f^{\prime} (u) |\nabla u|\right)^2\ge \beta \lambda \int_{\Omega} f^{\prime}(u) \left( \tilde f(u) \right)^2-\frac{\beta A^2}{4(1-\beta)} \int_{\Omega} \left( \tilde f(u) \right)^2.
\end{eqnarray}
Combining \eqref{eq:m2a} and \eqref{eq:m3a} yields
\begin{eqnarray}\label{eq:m4}
\lambda \int_{\Omega} \tilde f(u) g(u) +\lambda \int_{\Omega} g(u)\ge \beta \lambda \int_{\Omega} f^{\prime}(u) \left( \tilde f(u) \right)^2-\frac{\beta A^2}{4(1-\beta)} \int_{\Omega} \left( \tilde f(u) \right)^2.
\end{eqnarray}
Rearranging terms in the expression above, we obtain
\begin{eqnarray}\label{eq:m5}
\beta  \int_{\Omega} f^{\prime}(u) \left( \tilde f(u) \right)^2\le \int_{\Omega} \tilde f(u) g(u) +\int_{\Omega} g(u)+\frac{\beta A^2}{4\lambda(1-\beta)} \int_{\Omega}\left( \tilde f(u) \right)^2.
\end{eqnarray} 
Next, observe that by Lemma \ref{l:p3} 
\begin{eqnarray}\label{eq:m6}
g(u)\le (1-\delta) f^{\prime}(u)\tilde f(u),
\end{eqnarray}
for some $\delta>0$.
Applying this estimate  to the first term on the right hand side of   \eqref{eq:m5},  we have
\begin{eqnarray}
\beta  \int_{\Omega} f^{\prime}(u) \left( \tilde f(u) \right)^2\le (1-\delta)  \int_{\Omega}  f^{\prime}(u) \left( \tilde f(u) \right)^2 +\int_{\Omega} g(u)+\frac{\beta A^2}{4\lambda(1-\beta)} \int_{\Omega}\left( \tilde f(u) \right)^2.
\end{eqnarray}
Rearranging therms in the inequality above, we obtain
\begin{eqnarray}\label{eq:m7}
(\beta+\delta-1)\int_{\Omega} f^{\prime}(u) \left( \tilde f(u) \right)^2\le  \int_{\Omega} g(u)+\frac{\beta  A^2}{4\lambda(1-\beta)} \int_{\Omega}\left( \tilde f(u) \right)^2.
\end{eqnarray}
Choosing $\beta=1-\frac{\delta}{2}$ in \eqref{eq:m7}, we have
\begin{eqnarray}\label{eq:m8}
\int_{\Omega} f^{\prime}(u) \left( \tilde f(u) \right)^2\le \frac{2}{\delta}  \int_{\Omega} g(u) +\frac{A^2}{\lambda \delta^2}  \int_{\Omega}\left( \tilde f(u) \right)^2.
\end{eqnarray}
We also note that by property {\bf iv)} in Section \ref{s:2},  $\lambda\ge \frac{\lambda_0}{2}>0$ for $\lambda\in \left [\frac{\lambda^*}{2},\lambda^*\right)$. Hence, in this range
of the parameter $\lambda,$  \eqref{eq:m8} reduces to
\begin{eqnarray}\label{eq:m9}
\int_{\Omega} f^{\prime}(u) \left( \tilde f(u) \right)^2\le \frac{2}{\delta}  \int_{\Omega} g(u) +\frac{2A^2}{\lambda_0 \delta^2}  \int_{\Omega}\left( \tilde f(u) \right)^2.
\end{eqnarray}
Observe, using \eqref{eq:m6},  that for $s>0$ 
\begin{eqnarray}\label{eq:m10}
 \frac{g(s)}{f^{\prime}(s) (\tilde f(s))^2} \le (1-\delta) \frac{f^{\prime}(s) \tilde f(s) }{f^{\prime}(s) (\tilde f(s))^2}=\frac{(1-\delta)}{\tilde f(s)}, \quad   \frac{(\tilde f(s))^2}{f^{\prime}(s) (\tilde f(s))^2} =\frac{1}{ f^{\prime} (s)}.
\end{eqnarray}
Hence, by (H1),  we have
\begin{eqnarray}\label{eq:m11}
 \frac{g(s)}{f^{\prime}(s) (\tilde f(s))^2},   \frac{(\tilde f(s))^2}{f^{\prime}(s) (\tilde f(s))^2} \to 0, \quad \mbox{as} \quad s\to \infty.
\end{eqnarray}
Consequently, passing to the limit $\lambda\nearrow \lambda^*$ in \eqref{eq:m9},  we observe that the left hand side in this expression dominates the right hand side for sufficiently
large $u$.
Therefore, both terms on  the right hand side \eqref{eq:m9}  remain  bounded as $\lambda\nearrow \lambda^*$. As a result, we have that the left hand side in  \eqref{eq:m9} 
is bounded in this limit, that is 
\begin{eqnarray} \label{eq:m12}
\int_{\Omega} f^{\prime}(u^*) \left( \tilde f(u^*) \right)^2\le C,
\end{eqnarray}
 for some constant $C<\infty$ that depends on  $\Omega, f$ and $A$. This observation immediately  implies that $f(u^*) \in L^2(\Omega).$ Therefore, by \cite[Theorem 8.12]{GT},
 $u^*\in W^{2,2} (\Omega),$ which in turn implies that $u\in C^{0,\alpha} (\bar \Omega),$  as follows from \cite[Theorem 4.12]{AF}.  Consequently,  under hypothesis (H1), $f(u^*)\in C^{0,\alpha}(\bar \Omega)$ and hence, by \cite[Theorem 6.14]{GT},  $u\in C^{2,\alpha}(\bar \Omega)$. As a result,  the extremal solution of \eqref{eq:i1} is classical as claimed. 
\end{proof}

The next result shows that the extremal solution of \eqref{eq:i1} is the unique classical solution for this problem at $\lambda=\lambda^*.$
\begin{lemma} \label{l:32}
Under the assumptions of Lemma \ref{l:31} the extremal solution $u^*$ of \eqref{eq:i1} is the only classical solution of  \eqref{eq:i1} with $\lambda=\lambda^*$.
\end{lemma}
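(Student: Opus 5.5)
Let $w$ be any classical solution of \eqref{eq:i1} with $\lambda=\lambda^*$. The plan is to show $w\equiv u^*$, adapting the argument of \cite{Martel} to the non-selfadjoint operator. This has three steps: show $u^*\le w$; show that the linearization about $u^*$ has zero principal eigenvalue; then use convexity of $f$ and Lemma \ref{l:p5} to rule out $w\ne u^*$.

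\textbf{Step 1: $u^*\le w$.} For every $\lambda<\lambda^*$ the function $w$ satisfies $-\Delta w+A{\bf v}\cdot\nabla w=\lambda^* f(w)\ge \lambda f(w)$, so $w$ is a classical super-solution for the parameter $\lambda$. Sattinger's monotone iteration starting from $0$ (Lemma \ref{l:p1}) produces $u^\sharp_\lambda$ and stays below every super-solution, so $u^\sharp_\lambda\le w$. Letting $\lambda\nearrow\lambda^*$ gives $u^*\le w$. By Lemma \ref{l:31}, $u^*$ is classical and solves \eqref{eq:i1} at $\lambda^*$, obtained by passing to the limit in the equation using uniform $C^{2,\alpha}$ bounds.

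\textbf{Step 2: degeneracy at $u^*$.} Let $\mu_1^*$ be the principal eigenvalue of ${\cal L}_*=-\Delta+A{\bf v}\cdot\nabla-\lambda^* f'(u^*)$, with positive eigenfunction $\phi^*$ (given by \cite{BNV}).
\begin{itemize}
\item $\mu_1^*\ge 0$: by property {\bf iii)}, $\mu_1(\lambda)>0$ for $\lambda<\lambda^*$, and $f'(u^\sharp_\lambda)\to f'(u^*)$ uniformly. Continuity of the principal eigenvalue in the zeroth-order coefficient then gives $\mu_1^*\ge0$.
\item $\mu_1^*\le 0$: suppose $\mu_1^*>0$. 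Then ${\cal L}_*$ is invertible with a positive inverse. An implicit function theorem argument in $C^{2,\alpha}_0$ would produce classical solutions for $\lambda$ slightly larger than $\lambda^*$, contradicting property {\bf i)}.
\end{itemize}
A more elementary alternative for the second bullet is to set $z=u^*+t\phi^*$ for small $t>0$. It is a super-solution of $-\Delta z+A{\bf v}\cdot\nabla z\ge \lambda^*f(z)+\eps$, since $f(u^*+t\phi^*)=f(u^*)+tf'(u^*)\phi^*+O(t^2)$ while $t\mu_1^*\phi^*$ dominates near the interior. Near the boundary one adds $\eps\chi$ and uses the Hopf lemma. Lemma \ref{l:p4} then gives a contradiction. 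Hence $\mu_1^*=0$.

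\textbf{Step 3: uniqueness.} Set $h=w-u^*\ge0$. Subtracting the equations and using convexity,
\[
-\Delta h+A{\bf v}\cdot\nabla h-\lambda^* f'(u^*)h=\lambda^*\bigl(f(w)-f(u^*)-f'(u^*)h\bigr)\ge 0 .
\]
Since ${\bf v}$ is not self-adjoint, I pair this with the positive principal eigenfunction $\psi^*$ of the adjoint operator $-\Delta-A{\bf v}\cdot\nabla-\lambda^*f'(u^*)$. Because $\nabla\cdot{\bf v}=0$, this adjoint has the same principal eigenvalue $0$. Integrating by parts (no-penetration kills the boundary terms) gives
\[
\int_\Omega \lambda^*\bigl(f(w)-f(u^*)-f'(u^*)h\bigr)\psi^*=0 .
\]
By strict convexity of $f$ from (H1), $h\equiv0$ wherever $\psi^*>0$, that is, in all of $\Omega$. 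Hence $w=u^*$.

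The main obstacle is Step 2, specifically the inequality $\mu_1^*\le0$: the drift term prevents the variational argument, so Step 2 must rely on either the implicit function theorem or the explicit super-solution construction near the boundary.
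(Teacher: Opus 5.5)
Your proof is correct, but it takes a different route from the paper. The paper uses no spectral information here. If $w\ne u^*$ is a second classical solution at $\lambda^*$, then $w\ge u^*$, and strict convexity gives, for the midpoint $z=\frac12(u^*+w)$, that $-\Delta z+A{\bf v}\cdot\nabla z=\frac{\lambda^*}{2}\bigl(f(u^*)+f(w)\bigr)\ge \lambda^* f(z)$, strictly where $w\ne u^*$. Thus $z$ is a super-solution at $\lambda^*$ that is not a solution, which Lemma \ref{l:p5} rules out. You instead use the classical spectral argument. You first prove that ${\cal L}_*$ is degenerate (this is the paper's Lemma \ref{l:33}, and your elementary alternative is essentially its proof). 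You then pair ${\cal L}_*h\ge0$ with the positive adjoint eigenfunction. Your route delivers part 3) of Proposition \ref{p:1} along the way and anticipates the solvability conditions used in Lemma \ref{l:34}. However, its hard step, $\mu_1^*\le 0$, itself reduces to Lemma \ref{l:p4}/\ref{l:p5}. Once those lemmas are available, the midpoint trick makes your Steps 2 and 3 unnecessary.

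Within your own argument a few things can be tightened.

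Step 3 needs only $\mu_1^*\le0$. If $\mu_1^*<0$, the pairing gives $\int_\Omega \lambda^*\bigl(f(w)-f(u^*)-f'(u^*)h\bigr)\psi^*=\mu_1^*\int_\Omega h\psi^*\le 0$, which still forces $h\equiv0$. So the continuity argument for $\mu_1^*\ge0$ can be dropped.

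The adjoint has the same principal eigenvalue for a general reason: pair ${\cal L}_*\phi^*=\mu_1^*\phi^*$ with $\psi^*$ and use $\int_\Omega\phi^*\psi^*>0$. Incompressibility only removes the zeroth-order term $-A(\nabla\cdot{\bf v})$ from the adjoint. The boundary terms vanish because $h=\psi^*=0$ on $\partial\Omega$, not because of no-penetration.

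Finally, in the elementary alternative the Taylor remainder is $O\bigl(t^2(\phi^*)^2\bigr)$. So $u^*+t\phi^*$ is already a strict super-solution at $\lambda^*$ for small $t$, and Lemma \ref{l:p5} applies directly with no $\chi$-correction. Going through Lemma \ref{l:p4} instead requires first converting your super-solution of the $+\eps$ problem into a solution by monotone iteration, as in the proof of Lemma \ref{l:p5}.
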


\begin{proof}Let $\lambda=\lambda^*$ and assume that  in addition to  $u^*,$ the extremal solution of \eqref{eq:i1},  there exists
  another classical solution $w$ of this problem. By the construction of the extremal solution,  $w>u^*$ in $\Omega$. Set
\begin{equation}
z=\frac{u^{\ast}+w}{2},
\end{equation}
clearly $u^{\ast}<z<w$ in $\Omega.$ By (H1),  $f$  is strictly convex, hence $\frac{f(u^{\ast})+f(w)}{2}>f(\frac{u^{\ast}+w}{2})$. Therefore,
\begin{equation}
\left\{  
\begin{aligned}
&-\Delta z+A{\bf v}\cdot \nabla z = \frac{1}{2}\lambda^* (f(u^{\ast})+f(w)) >\lambda^* f(z) &&\text{in } \Omega, \\
&z=0 &&\text{on } \partial\Omega,
\end{aligned}
\right.
\end{equation}
Consequently, $z$ is a strict super-solution of \eqref{eq:i1} at $\lambda=\lambda^*$ which is in contradiction by Lemma \ref{l:p5}. 
\end{proof}

\begin{lemma}\label{l:33}Under  the assumptions of Lemma \ref{l:31}
an eigenvalue problem 
\begin{eqnarray}\label{eq:T6}
\left\{
\begin{array} {lll}
{\cal L}_*(\phi)=\mu \phi &\mbox{in} & \Omega,\\
\phi=0 & \mbox{on} & \partial \Omega,
\end{array}
\right.
\end{eqnarray} 
with
 ${\cal L}_*:=-\Delta +A {\bf v} \cdot \nabla -\lambda^*(A)  f^{\prime} (u^*)$
has its   the principal eigenvalue $\mu_1=0.$
\end{lemma}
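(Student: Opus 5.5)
We prove $\mu_1\ge 0$ by a limiting argument and $\mu_1\le 0$ by contradiction with the definition of $\lambda^*$.

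\emph{Lower bound $\mu_1\ge 0$.} By property \textbf{iii)}, for $\lambda<\lambda^*$ the principal eigenvalue $\mu_1(\lambda)$ of the operator ${\cal L}$ in \eqref{eq:p2} is positive. By Lemma \ref{l:31} and the monotonicity in property \textbf{ii)}, $u^\sharp_\lambda\nearrow u^*$, and $u^*\in C^{2,\alpha}(\bar\Omega)$ is bounded. Dini's theorem gives uniform convergence, using continuity of $u^*$ and of the $u^\sharp_\lambda$ on $\bar\Omega$. Hence the potentials $\lambda f'(u^\sharp_\lambda)$ converge uniformly to $\lambda^* f'(u^*)$. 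The principal eigenvalue of a non-selfadjoint operator with bounded zero-order coefficient depends continuously on that coefficient in $L^\infty$. This follows, for example, from its characterization $\mu_1=\sup\{\mu:\exists\,\phi>0 \text{ in }\Omega,\ ({\cal L}-\mu)\phi\ge0\}$ from \cite{BNV}. Since $\mu_1$ is also monotone in the potential, we get $\mu_1(\lambda)\to\mu_1$, and therefore $\mu_1\ge 0$.

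\emph{Upper bound $\mu_1\le 0$.} Suppose instead $\mu_1>0$. Let $\phi_1>0$ be the principal eigenfunction of ${\cal L}_*$, which lies in $C^{2,\alpha}(\bar\Omega)$ and has $\partial_\nu\phi_1<0$ on $\partial\Omega$ by Hopf's lemma. We build a super-solution for some $\lambda>\lambda^*$ of the form $\bar u=u^*+\tau\phi_1$, with small parameters $\tau>0$ and $\lambda=\lambda^*(1+\sigma)$. A Taylor expansion gives
\[
-\Delta\bar u+A{\bf v}\cdot\nabla\bar u-\lambda f(\bar u)=\tau\mu_1\phi_1-\sigma\lambda^* f(\bar u)-\lambda^*\bigl(f(u^*+\tau\phi_1)-f(u^*)-\tau f'(u^*)\phi_1\bigr).
\]
The last bracket is $O(\tau^2\phi_1^2)\le C\tau^2\phi_1$, because $f\in C^2$ and $u^*$ is bounded. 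The main obstacle is the term $-\sigma\lambda^* f(\bar u)$: it does not vanish on $\partial\Omega$, while $\tau\mu_1\phi_1$ does, so no choice of $\sigma>0$ can be absorbed near the boundary.

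To handle this, first fix $\tau$ so small that $\tau\mu_1\phi_1-C\tau^2\phi_1\ge \tfrac12\tau\mu_1\phi_1$. This makes $\bar u$ a super-solution at $\lambda=\lambda^*$ satisfying
\[
-\Delta\bar u+A{\bf v}\cdot\nabla\bar u\ge\lambda^* f(\bar u)+\tfrac12\tau\mu_1\phi_1,
\]
with strict inequality in $\Omega$. Since $\bar u\ge u^*$ and the excess $\tfrac12\tau\mu_1\phi_1\ge0$ is nontrivial, this contradicts Lemma \ref{l:p5}, which says every classical super-solution at $\lambda^*$ is an exact solution. (Alternatively one can invoke Lemma \ref{l:32}.) Hence $\mu_1\le0$.

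Combining the two bounds gives $\mu_1=0$. The delicate points are the continuity of $\mu_1$ in the potential and the boundary mismatch. The latter is avoided by routing the contradiction through Lemma \ref{l:p5} rather than attempting to increase $\lambda$ directly.
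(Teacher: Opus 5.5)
Your argument is correct, and its core is the paper's: perturb $u^*$ by a small multiple of the principal eigenfunction, obtain a classical super-solution at $\lambda^*$ with nonnegative, nontrivial excess, and contradict Lemma \ref{l:p5}. The paper uses $w=u^*+\eps(\phi_1+b\chi)$ to make the excess bounded below by a positive constant, but Lemma \ref{l:p5} does not need that, so your simpler $u^*+\tau\phi_1$ with excess $\ge\frac12\tau\mu_1\phi_1$ works just as well. Your proof of $\mu_1\ge 0$ (Dini plus the $1$-Lipschitz dependence of the principal eigenvalue on the potential in $L^\infty$) supplies a step the paper leaves tacit, since its proof only rules out $\mu_1>0$. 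The one loose point is the parenthetical appeal to Lemma \ref{l:32}. That lemma is itself derived from Lemma \ref{l:p5}, and a strict super-solution above $u^*$ need not produce a second solution, so rely on Lemma \ref{l:p5} as you do.
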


\begin{proof} Assume $\mu_1>0,$   set $\lambda^*=\lambda^*(A)$ and
\begin{eqnarray}\label{eq:T4}
w=u^{\ast}+\eps(\phi_1+b\chi),
\end{eqnarray}
 where 
$\phi_1>0$  in $\Omega$ is the eigenfunction of ${\cal L}_*$  with Dirichlet boundary conditions corresponding to the principal eigenvalue $\mu_1$ normalized such that $||\phi_1||_{\infty}=1$, 
$\chi$ is the solution of  (\ref{chi}), $0<\eps\ll 1$ and $b>0$ is a parameter to be chosen later.
Define 
\begin{eqnarray}\label{eq:T5}
P(w):=-\Delta w +A{\bf v}\cdot \nabla w-\lambda^{\ast}f(w).
\end{eqnarray}
Substituting \eqref{eq:T4} into \eqref{eq:T5}, using  equations \eqref{eq:i1}, \eqref{eq:T6}, \eqref{chi} and taking into account  the boundary conditions
 for $u^*$, $\phi_1$ and $\chi,$ we have
\begin{equation}
\left\{  
\begin{aligned}
&P(w)=\eps \mu_1 \phi_1 +\eps b +\lambda^*(f(u^*)+\eps f^{\prime}(u^*) \phi_1-f(w))
 &&\text{in } \Omega, \\
&w=0 &&\text{on } \partial\Omega.
\end{aligned}
\right.
\end{equation}
Using the Taylor expansion of $f(w)$ about  $u^*$, we have 
$f(w)=f(u^{\ast}+\eps(\phi_1+b\chi))=f(u^{\ast})+\eps f^{\prime}(u^{\ast})(\phi_1+b\chi)+\mathcal{O}(\eps^2)$ and
therefore
$f(u^*)+\eps f^{\prime}(u^*) \phi_1-f(w)=-b\eps f^{\prime}(u^*)\chi +\mathcal{O}(\eps^2).$
Hence,
\begin{equation}
\left\{  
\begin{aligned}
&P(w)= \eps \left( \mu_1 \phi_1+ b -  b \lambda^{\ast} f'(u^{\ast})\chi\right)+ \mathcal{O}(\eps^2) &&\text{in } \Omega, \\
&w=0 &&\text{on } \partial\Omega,
\end{aligned}
\right.
\end{equation}
which can be  rewritten in a more convenient form as 
\begin{equation}\label{eq:T7}
\left\{  
\begin{aligned}
&P(w)= \eps \left( b +\phi_1\mu_1\left (1- 
 b\left(\frac{ \lambda^{\ast}}{\mu_1}\right)\left( \frac{\chi}{\phi_1}\right)f^{\prime}(u^{\ast})\right)\right)+ \mathcal{O}(\eps^2) &&\text{in } \Omega, \\
&w=0 &&\text{on } \partial\Omega.
\end{aligned}
\right.
\end{equation}
Next, note that Hopf's Lemma guarantees that the ratio $\frac{\chi}{\phi_1}$ extends continuously to the boundary and hence uniformly bounded away from zero and infinity 
in $\bar \Omega$. Thus,   there are two constants $m$ and $M$ such that $0<   m \le \frac{\chi}{\phi_1} \le  M<\infty$ in $\bar \Omega.$ Moreover, $\lambda^*$ is bounded 
away from infinity uniformly in $A$, see property {\bf iv)} in Section \ref{s:2}. Setting
$M^*=\max_{\bar \Omega} \left(\frac{ \lambda^{\ast}}{\mu_1}\right)\left( \frac{\chi}{\phi_1}\right)f^{\prime}(u^{\ast})$ and choosing $b=\frac{1}{M^*},$ we conclude from 
\eqref{eq:T7} that for $\eps>0$ sufficiently small, we have
\begin{equation}\label{eq:T8}
\left\{  
\begin{aligned}
&P(w)\ge 
\frac{\eps}{M^*} + \mathcal{O}(\eps^2)> \frac{\eps}{2M^*}  &&\text{in } \Omega, \\
&w=0 &&\text{on } \partial\Omega.
\end{aligned}
\right.
\end{equation}
It follows from   \eqref{eq:T5} and \eqref{eq:T8} 
that $w=u^{\ast}+\eps(\phi_1+\chi/M^*)$
 verifies  
\begin{equation}\label{eq:T8_a}
\left\{  
\begin{aligned}
&-\Delta w +A{\bf v}\cdot \nabla w>\lambda^{\ast}f(w) &&\text{in } \Omega, \\
&w=0 &&\text{on } \partial\Omega.
\end{aligned}
\right.
\end{equation}
Thus, it  is a strict super-solution for \eqref{eq:i1} with $\lambda=\lambda^*,$ which is a contradiction by Lemma \ref{l:p5}. Hence, $\mu_1=0.$

%
\end{proof}

The claim  of  Lemma \ref{l:33} is expected and is well known in the context of the classical Gelfand problem. Indeed, non-degeneracy of linearization indicates that the branch
of minimal solutions can be prolonged for larger values of $\lambda.$ This follows directly from the implicit function theorem \cite{Brezis2,Korman}.
 Hence, non-degeneracy of linearization  would contradict the definition of
$\lambda^*.$
It is important to emphasize, however,  that this intuition fails when the  extremal solution is genuinely weak (does not belong to $L^{\infty}$). 
In this case one can have a situation in which the principal eigenvalue of the linearization on an extremal solution is in fact positive 
 \cite{Brezis2}.

To this end, we have established that  the extremal solution $u^*$ is the unique classical solution of \eqref{eq:i1} with $\lambda=\lambda^*(A).$
Moreover,  the linearization of \eqref{eq:i1} on $u^*$ is always degenerate. These results, however, are not sufficient  to establish the main result of this 
paper which requires continuity and differentiability of the extremal Frank-Kamenetskii parameter  $\lambda^*(A)$
which will be established below.



\begin{lemma}\label{l:34} The   extremal value of Frank-Kamenetskii parameter  $\lambda^*(A)$ is a continuous and differentiable function.
Moreover,
\begin{eqnarray}\label{eq:dotlam}
 \dot  \lambda^*(A):= \frac{d}{d A}  \lambda^*(A)=\frac{\int_{\Omega}( {\bf v} \cdot \nabla u^* )\tilde \phi_1}{\int_\Omega f(u^*) \tilde \phi_1},
  \end{eqnarray}
where $u^*$ is the extremal solution of \eqref{eq:i1} and
$\tilde \phi_1>0$ in $\Omega$  is an eigenfunction corresponding to the principal eigenvalue of an operator adjoint to the linearization of \eqref{eq:i1} about $u^*$. That
is a positive solution of
\begin{eqnarray}\label{eq:im3}
\left\{
\begin{array}{lll}
-\Delta \tilde \phi_1-A{\bf v} \cdot \nabla \tilde \phi_1 -\lambda^*(A)  f^{\prime} (u^*) \tilde \phi_1 =0& \mbox{in} & \Omega, \\
\tilde \phi_1=0 & \mbox{on} & \partial \Omega.
\end{array}
\right.
\end{eqnarray}
\end{lemma}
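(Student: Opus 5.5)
The strategy is to treat $(u^*(A),\lambda^*(A))$ as a fold (turning) point of the solution branch and apply a Crandall--Rabinowitz type implicit function argument to an augmented system. Fix $A_0$ and write $u_0=u^*(\cdot,A_0)$, $\lambda_0^*=\lambda^*(A_0)$. By Lemma \ref{l:31}, $u_0\in C^{2,\alpha}(\bar\Omega)$. By Lemma \ref{l:33}, the operator ${\cal L}_*$ has principal eigenvalue zero. This eigenvalue is simple, with eigenfunction $\phi_1>0$, and the adjoint operator has eigenfunction $\tilde\phi_1>0$ solving \eqref{eq:im3}; this follows from \cite{BNV}. Since ${\cal L}_*$ is Fredholm of index zero from $X:=C^{2,\alpha}\cap\{u=0 \mbox{ on }\partial\Omega\}$ to $Y:=C^{0,\alpha}(\bar\Omega)$, its range is $\{h\in Y:\int_\Omega h\tilde\phi_1=0\}$. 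Let $X_1$ be a complement of ${\rm span}\{\phi_1\}$ in $X$, for instance $\{w:\int w\tilde\phi_1=0\}$.

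Consider the map $G:\mathbb{R}\times X_1\times\mathbb{R}\times\mathbb{R}\to Y$ given by
\[
G(s,w,\lambda,A)=-\Delta U+A{\bf v}\cdot\nabla U-\lambda f(U),\qquad U=u_0+s\phi_1+w .
\]
The partial derivative of $G$ in $(w,\lambda)$ at $(0,0,\lambda_0^*,A_0)$ is $(\hat w,\hat\lambda)\mapsto{\cal L}_*\hat w-\hat\lambda f(u_0)$. This is an isomorphism $X_1\times\mathbb{R}\to Y$. Indeed, ${\cal L}_*$ restricted to $X_1$ is injective onto the range, and $f(u_0)$ is transversal because $\int f(u_0)\tilde\phi_1>0$, using $f>0$ and $\tilde\phi_1>0$. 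The implicit function theorem, with $f\in C^2$, then gives $C^1$ functions $w(s,A)$ and $\lambda(s,A)$ near $(0,A_0)$ such that $u_0+s\phi_1+w(s,A)$ solves \eqref{eq:i1} with $\lambda=\lambda(s,A)$. Moreover, these are the only solutions near $(u_0,\lambda_0^*,A_0)$.

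Differentiating in $s$ at $(0,A_0)$ gives $\partial_s\lambda=0$. Differentiating twice and pairing with $\tilde\phi_1$ gives
\[
\partial_s^2\lambda\int f(u_0)\tilde\phi_1=-\lambda_0^*\int f''(u_0)\phi_1^2\tilde\phi_1<0,
\]
so the branch is a fold opening to the left. The key identification step is to show that $\lambda^*(A)=\max_s\lambda(s,A)$ for $A$ near $A_0$. Here is the plan for it.
\begin{itemize}
\item[(a)] Lower bound. By the implicit function theorem, for $A$ close to $A_0$ there are classical solutions with $\lambda(s,A)$. Hence $\lambda^*(A)\ge\max_{|s|\le s_0}\lambda(s,A)$, and this tends to $\lambda_0^*$, which gives lower semicontinuity.
\item[(b)] Upper bound. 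Take $A_n\to A_0$. The bound \eqref{eq:m12} in the proof of Lemma \ref{l:31} is uniform for $A$ in compact sets, since the constants depend on $A$ only through $A^2$, together with property {\bf iv)}. Hence $u^*(\cdot,A_n)$ is bounded in $C^{2,\alpha}$. A subsequence converges to a classical solution at $A_0$ with parameter $\limsup\lambda^*(A_n)$. By {\bf i)}, this parameter is at most $\lambda_0^*$, giving upper semicontinuity.
\item[(c)] Uniqueness of the limit. If equality holds in (b), Lemma \ref{l:32} forces the limit to be $u_0$. So for large $n$ the pair $(u^*(A_n),\lambda^*(A_n))$ lies in the implicit function neighbourhood, and therefore equals $(U(s_n,A_n),\lambda(s_n,A_n))$ for some $s_n$.
\item[(d)] Reaching the maximum. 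By Lemma \ref{l:33} the linearization at $u^*(A_n)$ is degenerate, which forces $\partial_s\lambda(s_n,A_n)=0$. Since $\partial_s^2\lambda<0$ near $(0,A_0)$, this critical point is the unique maximizer.
\end{itemize}
Together these give continuity and $\lambda^*(A)=\lambda(s(A),A)$, where $s(A)$ is the $C^1$ root of $\partial_s\lambda=0$ supplied by the implicit function theorem again.

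Differentiability then follows from an envelope argument:
\[
\dot\lambda^*(A_0)=\partial_A\lambda(0,A_0)+\partial_s\lambda\,\dot s=\partial_A\lambda(0,A_0).
\]
Differentiating the equation $G=0$ in $A$ at $(0,A_0)$ gives ${\cal L}_*(\partial_Aw)+{\bf v}\cdot\nabla u_0-\partial_A\lambda\, f(u_0)=0$. Multiplying by $\tilde\phi_1$ and integrating kills the first term and yields \eqref{eq:dotlam}.

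I expect the main obstacle to be the identification of $\lambda^*(A)$ with the fold value, that is, steps (b)--(d). This needs the uniform-in-$A$ compactness of extremal solutions, and it needs care because $\partial_s^2\lambda$ uses $f''$. That is only $C^0$, so a $C^2$ implicit function theorem is unavailable. I would handle this by working directly with $\partial_s\lambda$, which is $C^1$ in $s$, since $\partial_s G$ involves $f'(U)$ and is $C^1$. If $\partial_s\lambda$ still fails to be $C^1$, I would replace the envelope argument by two-sided difference quotients. For the lower bound use $\lambda^*(A)\ge\lambda(s(A_0),A)$, and for the upper bound use $\lambda^*(A_0)\ge\lambda(s(A),A_0)$. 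Then continuity of $s(A)$ gives $\dot\lambda^*=\partial_A\lambda$ directly.
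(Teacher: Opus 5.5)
Your proof is correct, and it takes a genuinely different route from the paper.

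\textbf{The paper's route.} The paper never builds a local solution curve.
\begin{enumerate}
\item It proves Lipschitz continuity of $\lambda^*$ by a super-solution comparison: each extremal solution is a super-solution for the other amplitude, with a slightly smaller $\lambda$.
\item It writes the equation for $w^\eps=u^\eps-u^0$ linearized both at $u^0$ and at $u^\eps$. It applies the Fredholm solvability condition against $\tilde\phi_1^0$ and $\tilde\phi_1^\eps$. The sign of the Taylor remainder $f''(\xi)(w^\eps)^2$ then gives two one-sided inequalities for the difference quotient $\sigma(\eps)$.
\item Boundedness of that remainder gives $\int_\Omega (w^\eps)^2 d_{\partial\Omega}=O(\eps)$. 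Via Hardy's inequality this yields $\|\nabla(u^\eps-u^0)\|_2^2=O(\eps)$ and $L^2$ convergence of $\tilde\phi_1^\eps$. These squeeze $\sigma(\eps)$ to \eqref{eq:dotlam}.
\end{enumerate}

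\textbf{Your route.} You obtain $\lambda^*(A)$ as the maximum over $s$ of a $C^1$ family $\lambda(s,A)$ produced by the implicit function theorem for the bordered system. You identify the extremal solution with the maximizer using uniform-in-$A$ compactness and the uniqueness Lemma \ref{l:32}. The formula then follows from a single solvability computation.

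\textbf{Comparison.} The paper's argument avoids the Banach-space setup and Lemma \ref{l:32}, and it gives explicit rates. Yours gives more structure: continuity of $A\mapsto u^*(\cdot,A)$ in $C^{2,\alpha}$, and in fact $\lambda^*\in C^1$, since the same squeeze at nearby base points gives $\dot\lambda^*(A)=\partial_A\lambda(s(A),A)$. Your step (b) also makes explicit the uniform bound on $\nabla u^*(\cdot,A)$ for $A$ in compact sets. The paper's Lipschitz step uses this bound tacitly when it treats $c_0=\sup_{\bar\Omega}|{\bf v}\cdot\nabla u^\eps|$ as independent of $\eps$.

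\textbf{On the obstacle you flag.} With $f$ only $C^2$, the map $U\mapsto f'(U)$ need not be differentiable into $C^{0,\alpha}$. So $\partial_sG$ need not be $C^1$ there, and neither $\partial_s^2\lambda$ nor the $C^1$ root $s(A)$ of $\partial_s\lambda=0$ is justified in your Hölder setting. Your fallback does not need them. The two-sided squeeze uses only two facts, both supplied by (b)--(c): $\lambda(s,A)$ is $C^1$, and the parameter $s(A)$ of $u^*(\cdot,A)$ tends to $0$. Uniqueness of the maximizer is irrelevant. In (d), $\partial_s\lambda(s_n,A_n)=0$ follows from interior maximality alone, without Lemma \ref{l:33}. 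Alternatively, you can pose $G$ from $W^{2,p}\cap W^{1,p}_0(\Omega)$ to $L^p(\Omega)$ with $p>2$, where $W^{2,p}\hookrightarrow C(\bar\Omega)$ in two dimensions. Then $G$ is of class $C^2$ for $f\in C^2$, and your main line, including $\partial_s^2\lambda<0$, goes through.
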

\begin{proof} The proof consists of several steps.
 Let us first prove that $\lambda^*(A)$ is Lipschitz continuous.  Fix $A_0\in \mathbb{R}$ and let $\eps>0$ be  sufficiently small.  Let $u^0$ and $u^{\eps}$ be the extremal solutions
of \eqref{eq:i1} with $A=A_0$ and $A=A_0+\eps$ respectively, and $\lambda_0^*:=\lambda^*(A_0), ~ \lambda_\eps^*=\lambda^*(A_0+\eps).$  That is, 
\begin{eqnarray}\label{eq:l34_a}
\left\{
\begin{array}{lll}
-\Delta u^0+A_0 {\bf v} \cdot \nabla u^0=\lambda_0^* f(u^0) & \mbox{in} & \Omega,\\
-\Delta u^{\eps}+(A_0+\eps)  {\bf v} \cdot \nabla u^{\eps}=\lambda_\eps^* f(u^{\eps}) & \mbox{in} & \Omega,\\
u^0=u^{\eps}=0 & \mbox{on} & \partial \Omega.
\end{array}
\right.
\end{eqnarray}
We recall that $u^0, u^{\eps}$ are classical solutions of \eqref{eq:l34_a} as follow from Lemma \ref{l:33}.

Set $c_0=\sup_{\bar \Omega} |{\bf v} \cdot \nabla u^{\eps} |$ and $c_1=\frac{c_0}{f(0) \lambda_{\eps}^*}.$ We then have from the second equation in \eqref{eq:l34_a} 

\begin{eqnarray}
-\Delta u^{\eps}+A_0  {\bf v} \cdot \nabla u^{\eps}=\lambda_{\eps}^* f(u^{\eps})-\eps {\bf v}\cdot \nabla u^{\eps}\ge \lambda_{\eps}^* f(u^{\eps})-\eps c_0 \ge \lambda_{\eps}^*(1-\eps c_1)  f(u^{\eps}).
\end{eqnarray}
Therefore, $u^{\eps}$ is a classical super-solution for
\begin{eqnarray}\label{eq:add1}
\left\{
\begin{array}{lll}
-\Delta u+A_0  {\bf v} \cdot \nabla u = \lambda_{\eps}^*(1-\eps c_1)  f(u) &\mbox{in} & \Omega,\\
u=0 & \mbox{on} & \partial \Omega.
\end{array}
\right.
\end{eqnarray}
Hence, by Lemma \ref{l:p1}, \eqref{eq:add1} admits a classical solution and thus, by the definition of $\lambda_0^*$ we must have
\begin{eqnarray}\label{eq:l34_b}
\lambda_0^*\ge \lambda_{\eps}^*-c_2\eps,
\end{eqnarray}
 where $c_2=\frac{c_0}{f(0)}.$

On the other hand, as follows from the first equation in \eqref{eq:l34_a}, we have
 \begin{eqnarray}
-\Delta u^0+(A_0+\eps)  {\bf v} \cdot \nabla u^0=\lambda_0^* f(u^0)+\eps {\bf v}\cdot \nabla u^0\ge \lambda_0^* f(u^0)-\eps c_3 \ge \lambda_0^*(1-\eps c_4)  f(u^0),
\end{eqnarray}
where $c_3=\sup_{\bar \Omega} |{\bf v} \cdot \nabla u^0|$ and $c_4=\frac{c_3}{f(0) \lambda_0^*}.$

Consequently, $u^0$ is a classical  super-solution for 
\begin{eqnarray}
\left\{
\begin{array}{lll}
-\Delta u+(A_0 +\eps)  {\bf v} \cdot \nabla u = \lambda_0^*(1-\eps c_4)  f(u) &\mbox{in} & \Omega,\\
u=0 & \mbox{on} & \partial \Omega,
\end{array}
\right.
\end{eqnarray}
which, by Lemma \ref{l:p1}, implies that 
\begin{eqnarray}\label{eq:l34_c}
\lambda_\eps^*\ge \lambda_0^*-c_5\eps,
\end{eqnarray}
where $c_5=\frac{c_3}{f(0)}.$

Combining \eqref{eq:l34_b} and \eqref{eq:l34_c}, we have $|\lambda_\eps^*-\lambda_0^*| \le C \eps$, where the constant $C$ is independent of $\eps$. 
In view that the computations above are symmetric with respect to changing the sign of $\eps,$ we conclude that
\begin{eqnarray}
\left | \frac{\lambda^*(A_0+\eps) -\lambda^* (A_0)}{\eps} \right |\le C.
\end{eqnarray}
Hence, $\lambda^*(A)$ is Lipschitz continuous as claimed.

In what follows, we denote
\begin{eqnarray}\label{eq:sigma}
\sigma(\eps):= \frac{ \lambda_{\eps}^* -\lambda_0^*}{\eps}=\frac{\lambda^*(A_0+\eps) -\lambda^* (A_0)}{\eps},
\end{eqnarray}
with $\sigma(\eps)$ being some uniformly bounded function.

Next,  taking th difference  of  the first and the second equations in \eqref{eq:l34_a}, setting 
\begin{eqnarray}
w^{\eps}:=u^{\eps}-u^0,
\end{eqnarray}
  and using convexity of $f,$ we have
\begin{eqnarray}
-\Delta w^{\eps}+A_0 {\bf v} \cdot \nabla w^{\eps}=-\eps {\bf v}\cdot \nabla u^{\eps}+\lambda_{\eps}^* f(u^{\eps}) -\lambda_0^* f(u^0)=\nonumber\\
-\eps {\bf v}\cdot \nabla u^{\eps}+(\lambda_{\eps}^*-\lambda_0^*) f(u^{\eps}) +\lambda_0^* ( f(u^{\eps})-f(u^0))=\\
-\eps {\bf v}\cdot \nabla u^{\eps}+\eps \sigma(\eps) f(u^{\eps}) +\lambda_0^* (f^{\prime}(u^0)w^{\eps} +\frac12 f^{\prime\prime} (\xi_1) (w^{\eps})^2), \nonumber
\end{eqnarray}
and 
\begin{eqnarray}
-\Delta w^{\eps} +(A_0+\eps) {\bf v} \cdot \nabla w^{\eps}=-\eps {\bf v} \cdot \nabla u^0+\lambda^*_{\eps}f(u^{\eps}) -\lambda_0^* f(u^0)=\nonumber \\
-\eps {\bf v} \cdot \nabla u^0+(\lambda_{\eps}^*-\lambda_0^*) f(u^0)+ \lambda_{\eps}^* (f(u^{\eps})-f(u^0))=\\
-\eps {\bf v} \cdot \nabla u^0+\eps \sigma(\eps) f(u^0) +\lambda_{\eps}^*(f^{\prime}(u^{\eps})w^{\eps}-\frac12 f^{\prime\prime}(\xi_2) (w^{\eps})^2), \nonumber
\end{eqnarray}
where $\xi_1,\xi_2$ are some intermediate points between $u^{\eps}$ and $u^0.$  Rearranging terms in the two equations above, we have 
\begin{eqnarray}\label{eq:l34_e}
-\Delta w^{\eps}+A_0 {\bf v} \cdot \nabla w^{\eps} -\lambda_0^* f^{\prime} (u^0) w^{\eps}= -\eps {\bf v}\cdot \nabla u^{\eps}+\eps \sigma(\eps) f(u^{\eps}) +\frac{\lambda_0^*}{2} f^{\prime\prime} (\xi_1) (w^{\eps})^2,
\end{eqnarray}
\begin{eqnarray}\label{eq:l34_dd}
-\Delta w^{\eps}+(A_0+\eps)  {\bf v} \cdot \nabla w^{\eps} -\lambda_{\eps}^* f^{\prime} (u^{\eps}) w^{\eps}= -\eps {\bf v}\cdot \nabla u^0+\eps \sigma(\eps) f(u^0) -\frac{\lambda_{\eps}^*}{2} f^{\prime\prime} (\xi_2) (w^{\eps})^2.
\end{eqnarray}
The  linear operators on  the left hand sides of \eqref{eq:l34_e} and \eqref{eq:l34_dd} are not invertible. 
Consequently, by  Fredholm alternative \cite[Section 6.2.3, Theorem 4]{Evans},
the right hand sides of these equations must be
orthogonal in $L^2$ to eigenfunctions corresponding to the principal eigenvalues  of  adjoint operators to linearization of \eqref{eq:i1} about  $u^0$ and $u^{\eps},$ respectively.
That is, the right hand sides  of \eqref{eq:l34_e} and \eqref{eq:l34_dd}  must be orthogonal in $L^2$ to  $\tilde \phi_1^{0}, \tilde \phi_1^{\eps}>0,$ which  verify 
\begin{eqnarray}
\left\{
\begin{array}{lll}
-\Delta \tilde \phi_1^0-A_0 {\bf v} \cdot \nabla \tilde \phi_1^0 -\lambda_0^* f^{\prime} (u^0) \tilde \phi_1^0=0 & \mbox{in} & \Omega,\\
-\Delta \tilde \phi_1^{\eps}-(A_0+\eps)  {\bf v} \cdot \nabla \tilde \phi_1^{\eps}-\lambda_{\eps}^* f^{\prime} (u^{\eps}) \tilde \phi_1^{\eps}=0 & \mbox{in} & \Omega,\\
\tilde \phi_1^0=\tilde \phi_1^{\eps}=0 & \mbox{on} & \partial \Omega.
\end{array}
\right.
\end{eqnarray}
Note that regularity of $u^0$ and $u^{\eps}$ and assumptions (H1) , (H2) guarantee that $\tilde \phi_1^0, \tilde \phi_1^{\eps} \in C^{2,\alpha} (\bar \Omega),$ as follows from
\cite[Theorem 6.15]{GT}. We normalize   $\tilde \phi_1^{0}, \tilde \phi_1^{\eps}$ such that $||\tilde \phi_1^0||_2=||\tilde \phi_1^{\eps}||_2=1,$ which makes 
$\tilde \phi_1^0, \tilde \phi_1^{\eps}$ uniquely defined since  $\tilde \phi_1^0, \tilde \phi_1^{\eps}$  are eigenfunction corresponding to  principal eigenvalues which are simple by \cite[Theorem 2.1]{BNV}.

Hence, the solvability conditions read:
\begin{eqnarray}\label{eq:l34_f}
&& \int_{\Omega} (\sigma(\eps) f(u^{\eps}) -{\bf v}\cdot \nabla u^{\eps})\tilde \phi_1^0=-\frac{\lambda_0^*}{2\eps} \int_{\Omega} f^{\prime\prime}(\xi_1) (w^{\eps})^2 \tilde \phi_1^0, \nonumber\\
&& \int_{\Omega} (\sigma(\eps) f(u^0) -{\bf v}\cdot \nabla u^0)\tilde \phi_1^{\eps}=\frac{\lambda_{\eps}^*}{2\eps} \int_{\Omega} f^{\prime\prime}(\xi_2) (w^{\eps})^2 \tilde \phi_1^{\eps}.
\end{eqnarray}
Equalities above imply 
\begin{eqnarray}\label{eq:l34_g}
 \int_{\Omega} (\sigma(\eps) f(u^{\eps}) -{\bf v}\cdot \nabla u^{\eps})\tilde \phi_1^0\le 0, \quad  \int_{\Omega} (\sigma(\eps) f(u^0) -{\bf v}\cdot \nabla u^0)\tilde \phi_1^{\eps}\ge 0.
\end{eqnarray}
Moreover, since left hand sides in equalities \eqref{eq:l34_f} are bounded independently of $\eps,$ we conclude that their right hand sides must be bounded as well.
Thus, $ \frac{\lambda_0^*}{2\eps} \int_{\Omega} f^{\prime\prime}(\xi_1) (w^{\eps})^2 \tilde \phi_1^0\le C, ~ \frac{\lambda_\eps^*}{2\eps} \int_{\Omega} f^{\prime\prime}(\xi_2) (w^{\eps})^2 \tilde \phi_1^{\eps}\le C.$ Consequently,
\begin{eqnarray}\label{eq:34_s}
 \int_{\Omega}  (w^{\eps})^2 d_{\partial \Omega} =\int_{\Omega}  (u^{\eps}-u^0)^2 d_{\partial \Omega} \le C \eps,
\end{eqnarray}
where $C$ is independent of $\eps$ and $d_{\partial \Omega}$ stands for the distance to the boundary.

 Let us  now show that  $\int_{\Omega}  |\nabla (u^{\eps}-u^0)|^2 \to 0$ as $\eps\to 0.$
Taking the difference of the second and the first equations in \eqref{eq:l34_a} and using mean value theorem, we have
\begin{eqnarray}
&&-\Delta w^{\eps} +A_0 {\bf v} \cdot \nabla w^{\eps} =-\eps {\bf v} \cdot \nabla u^{\eps}+\lambda_{\eps}^* f(u^{\eps})-\lambda_0^* f(u^0)=\nonumber\\
&& -\eps {\bf v} \cdot \nabla u^{\eps}+(\lambda_{\eps}^*-\lambda_0^*) f(u^0) +\lambda_{\eps}^*(f(u^{\eps})-f(u^0))= \\
&&  -\eps {\bf v} \cdot \nabla u^{\eps}+\eps \sigma(\eps) f(u^0) +\lambda_{\eps}^* f^{\prime} (\xi_3) w^{\eps}, \nonumber 
\end{eqnarray}
where $\xi_3$  is an  intermediate point between $u^0$ and $u^{\eps}$.

Multiplying the expression above by $w^{\eps},$ integrating by parts and taking into account that $w^{\eps}|_{\partial \Omega}=0$  and the incompressibility condition 
of the flow ${\bf v},$ we have
\begin{eqnarray}
\int_{\Omega} |\nabla w^{\eps}|^2=\eps \int_{\Omega} \left \{ \sigma(\eps) f(u^0) -{\bf v}\cdot \nabla u^{\eps}\right\} w^{\eps} +\lambda_\eps^* \int_{\Omega} f^{\prime} (\xi_3) (w^{\eps})^2.
\end{eqnarray}
 The equality above and Cauchy-Schwartz and Poincare inequalities yield
 \begin{eqnarray}\label{eq:34_ss}
 \int_{\Omega} |\nabla w^{\eps}|^2 \le C \left ( \eps \left( \int_{\Omega} (w^{\eps})^2\right)^{1/2} + \int_{\Omega} (w^{\eps})^2\right)
 \le C \left( \eps \left(  \int_{\Omega} |\nabla w^{\eps}|^2 \right)^{1/2}+ \int_{\Omega} (w^{\eps})^2\right).
 \end{eqnarray} 
Using Hardy's inequality \cite[Lemma 50.3]{QS} and Cauchy-Schwartz  inequality,  we also obtain
\begin{eqnarray}
&&\int_{\Omega} (w^{\eps})^2 = \int_{\Omega} \left(\frac{w^{\eps}}{d_{\partial \Omega}} \right) \left(w^{\eps} d_{\partial \Omega}\right) \le \left( \int_{\Omega} \left(\frac{w^{\eps}}{d_{\partial \Omega}}\right)^2 \int_{\Omega}  (w^{\eps})^2 d^2_{\partial \Omega} \right)^{1/2}\le \nonumber \\
&&C \left(\int_{\Omega} |\nabla w^{\eps}|^2 \right)^{1/2} \left(  \int_{\Omega} (w^{\eps})^2 d^2_{\partial \Omega} \right)^{1/2} \le 
C \left(\int_{\Omega} |\nabla w^{\eps}|^2 \right)^{1/2} \left(  \int_{\Omega} (w^{\eps})^2 d_{\partial \Omega} \right)^{1/2}.
\end{eqnarray} 
This inequality together with \eqref{eq:34_s} give
\begin{eqnarray}\label{eq:34_sss}
\int_{\Omega} (w^{\eps})^2 \le C \sqrt{\eps} \left(\int_{\Omega} |\nabla w^{\eps}|^2 \right)^{1/2}.
\end{eqnarray}
Combining \eqref{eq:34_ss} and \eqref{eq:34_sss}, we obtain
\begin{eqnarray}
 \int_{\Omega} |\nabla w^{\eps}|^2\le C \sqrt{\eps} \left( \int_{\Omega} |\nabla w^{\eps}|^2\right)^{1/2}.
\end{eqnarray}
Hence, 
\begin{eqnarray}
 \int_{\Omega} |\nabla (u^{\eps}-u^0)|^2\le C \eps,
\end{eqnarray}
and thus $u^{\eps}$ converges to $u^0$ in $H_0^1(\Omega)$ as claimed.

Now, let us show that the normalized eigenfunction $\tilde \phi_1^{\eps}$ converge strongly in $L^2(\Omega),$ to the normalized eigenfunction $\tilde \phi_1^0$ as $\eps\to 0.$  
First, we observe that
\begin{eqnarray}\label{eq:34_abc}
\int_{\Omega} \nabla \tilde \phi^{\eps}_1 \cdot \nabla \eta -(A_0+\eps) \int_{\Omega} {\bf v} \cdot \nabla \tilde \phi_1^{\eps} \eta -\lambda_{\eps}^* \int_{\Omega} f^{\prime} (u^{\eps}) \tilde \phi_1^{\eps} \eta=0
\quad \forall \eta \in H_0^1(\Omega).
\end{eqnarray}
Choosing $\eta=\tilde \phi_1^{\eps} $ and using the normalization  of $\tilde \phi_1^{\eps},$ we have that $ \int_{\Omega} |\nabla \tilde \phi_1^{\eps}|^2<C$, for some $C$ independent of $\eps$. 
Thus, $\tilde \phi_1^{\eps}$ is uniformly bounded
in $H_0^1(\Omega).$ Consequently,  $\tilde \phi_1^{\eps}$ converges weakly in $H_0^1(\Omega)$ and strongly in $L^2(\Omega)$ as $\eps \to 0$,  see e.g \cite[Section 5.7]{Evans}. Hence, taking a limit $\eps\to 0$ in \eqref{eq:34_abc}, we arrive to the following  limiting equation
\begin{eqnarray}\label{eq:34_j}
\int_{\Omega} \nabla \bar \phi \cdot \nabla \eta -A_0\int_{\Omega} {\bf v} \cdot \nabla\bar  \phi \eta -\lambda_{0}^* \int_{\Omega} f^{\prime} (u^0) \bar \phi\eta=0
\quad \forall \eta \in H_0^1(\Omega).
\end{eqnarray}
In view that $g=\lambda_{0}^*  f^{\prime} (u^0) \bar \phi  \in L^2(\Omega),$ we have that $\bar \phi \in W^{2,2}(\Omega)$, thanks to \cite[Theorem 8.12]{GT}.
By Sobolev Imbedding Theorem, \cite[Theorem 4.12, Part II]{AF},  this implies that $\bar \phi  \in C^{0,\alpha} (\bar \Omega)$ and consequently  $g  \in C^{0,\alpha} (\bar \Omega).$
Note that a problem
\begin{eqnarray}\label{eq:34_jj}
\int_{\Omega} \nabla z \cdot \nabla \eta -A_0\int_{\Omega} {\bf v} \cdot \nabla z  \eta = \int_{\Omega} g  \eta
\quad \forall \eta \in H_0^1(\Omega)
\end{eqnarray}
admits a classical solution $z=\bar z$ as 
follows from \cite[Theorem 6.15]{GT}.  Taking the  difference of \eqref{eq:34_j}  and \eqref{eq:34_jj} with $z=\bar z,$ we have
 \begin{eqnarray}
\int_{\Omega} \nabla (\bar z- \bar \phi  ) \cdot \nabla \eta -A_0\int_{\Omega} {\bf v} \cdot \nabla ( \bar z- \bar \phi)  \eta = 0
\quad \forall \eta \in H_0^1(\Omega).
\end{eqnarray}
Choosing $\eta=  \bar z- \bar \phi $ in the equation above and  using the  incompressibility condition,  we obtain
$
\int_{\Omega}| \nabla ( \bar z -\bar \phi)|^2 = 0.
$
Hence, $\bar  \phi=\bar z.$  Consequently, the limiting solution $\bar \phi$ is classical and hence,  by uniqueness of the classical normalized positive eigenfunction,   equals to  $\tilde \phi_1^0$. Thus, the limiting normalized solution $\bar \phi$ of \eqref{eq:34_j} is unique classical  and $\bar \phi=\tilde \phi_1^0.$

To this end, we established that 
\begin{eqnarray}\label{eq:to0}
\int_{\Omega} |\nabla (u^{\eps} -u^0)|^2 \to 0, \quad  \int_{\Omega}  (\tilde \phi_1^{\eps} -\tilde \phi_1^0)^2 \to 0 \quad \mbox{as} \quad \eps\to 0.
\end{eqnarray}
Now, let us proceed to the  final step.  Let 
\begin{eqnarray}
 I_{\eps}:=\int_{\Omega} (\sigma(\eps) f(u^{0}) -{\bf v}\cdot \nabla u^{0})\tilde \phi_1^0.
\end{eqnarray}
By the first inequality in \eqref{eq:l34_g}, we have
\begin{eqnarray}\label{eq:l34_ggg}
&& 0\ge  \int_{\Omega} (\sigma(\eps) f(u^{\eps}) -{\bf v}\cdot \nabla u^{\eps})\tilde \phi_1^0= \nonumber \\
&& \int_{\Omega} (\sigma(\eps) f(u^{0}) -{\bf v}\cdot \nabla u^{0})\tilde \phi_1^0
 + \int_{\Omega} (\sigma(\eps)( f(u^{\eps})-f(u^0)) -{\bf v}\cdot \nabla (u^{\eps}-u^0))\tilde \phi_1^0 =\\
 &&I_{\eps} +\int_{\Omega} (\sigma(\eps)( f(u^{\eps})-f(u^0)) -{\bf v}\cdot \nabla (u^{\eps}-u^0))\tilde \phi_1^0. \nonumber
\end{eqnarray}
Using Cauchy-Schwartz and Poincare inequalities, we observe  that 
\begin{eqnarray}
&&\left |\int_{\Omega} (\sigma(\eps)( f(u^{\eps})-f(u^0)) -{\bf v}\cdot \nabla (u^{\eps}-u^0))\tilde \phi_1^0\right |= \left |\int_{\Omega} (\sigma(\eps) f^{\prime} (\xi_4) (u^{\eps}-u^0) -{\bf v}\cdot \nabla (u^{\eps}-u^0))\tilde \phi_1^0\right |  \nonumber  \\
 &&\le  |\sigma(\eps)|\left |\int_{\Omega}  f^{\prime} (\xi_4) (u^{\eps}-u^0)\tilde \phi_1^0\right|+ \left | \int_{\Omega} {\bf v}\cdot \nabla (u^{\eps}-u^0)\tilde \phi_1^0\right| 
 \le \\
 && C \left ( \left(\int_{\Omega} (u^{\eps}-u^0)^2 \right)^{1/2}+ \left( \int_{\Omega}| \nabla (u^{\eps}-u^0)|^2\right)^{1/2} \right) \le C \left( \int_{\Omega}| \nabla (u^{\eps}-u^0)|^2\right)^{1/2}, \nonumber
\end{eqnarray}
where $\xi_4$ is between $u^0$ and $u^{\eps}.$
Hence, by \eqref{eq:to0}, the second term on the right hand side of  \eqref{eq:l34_ggg} goes to zero as $\eps\to 0.$
This observation and \eqref{eq:l34_ggg} imply that 
\begin{eqnarray}\label{eq:y}
J_{\eps}\ge  I_{\eps},
\end{eqnarray}
for some  $J_{\eps} \to 0$ as $\eps\to 0.$
From the second inequality  \eqref{eq:l34_g}, we have
\begin{eqnarray}\label{eq:l34_ggg1}
&&0\le  \int_{\Omega} (\sigma(\eps) f(u^0) -{\bf v}\cdot \nabla u^0)\tilde \phi_1^{\eps}= \int_{\Omega} (\sigma(\eps) f(u^0) -{\bf v}\cdot \nabla u^0)\tilde \phi_1^{0}
+\int_{\Omega} (\sigma(\eps) f(u^0) -{\bf v}\cdot \nabla u^0)(\tilde \phi_1^{\eps}-\tilde \phi_1^0)\nonumber \\
&&= I_{\eps} +\int_{\Omega} (\sigma(\eps) f(u^0) -{\bf v}\cdot \nabla u^0)(\tilde \phi_1^{\eps}-\tilde \phi_1^0).
\end{eqnarray}
Since, by \eqref{eq:to0},
\begin{eqnarray}
|\int_{\Omega} (\sigma(\eps) f(u^0) -{\bf v}\cdot \nabla u^0)(\tilde \phi_1^{\eps}-\tilde \phi_1^0)| \le C\left ( \int_{\Omega} (\tilde \phi_1^{\eps}-\tilde \phi_1^0)^2\right)^{1/2}
\to 0 \quad \mbox{as} \quad \eps\to 0,
\end{eqnarray}
we have from \eqref{eq:l34_ggg1}
\begin{eqnarray}\label{eq:x}
I_{\eps}\ge K_{\eps},
\end{eqnarray}
for some $K_{\eps} \to 0$ as $\eps\to 0$.
Taking a limit $\eps\to 0$ in  \eqref{eq:y} and \eqref{eq:x}, we have  $I_{\eps} \to 0$ as $\eps \to 0.$
Consequently, $\sigma(\eps)$ has a limit as $\eps\to 0$  and converges to  $\frac{\int_{\Omega}( {\bf v} \cdot \nabla u^0 )\tilde \phi_1^0}{\int_\Omega f(u^0) \tilde \phi_1^0}.$
By the definition of $\sigma(\eps),$ see equation \eqref{eq:sigma},  this limit is  $\frac{d}{d A}  \lambda^*(A)\Big \vert_{A=A_0}$.
Hence, as $\eps\to 0,$ we have 
\begin{eqnarray}
 \sigma(\eps) \to \frac{d}{d A}  \lambda^*(A)\Big \vert_{A=A_0}=\frac{\int_{\Omega}( {\bf v} \cdot \nabla u^0 )\tilde \phi_1^0}{\int_\Omega f(u^0) \tilde \phi_1^0}.
\end{eqnarray}
In view of an arbitrary choice of $A_0,$ we have \eqref{eq:dotlam}, which completes the proof.
\end{proof}

 \begin{proof}[Proof of Proposition \ref{p:1}]
 Claims 1)-4) of Proposition \ref{p:1} follow from Lemmas \ref{l:31}- \ref{l:34}, respectively.
 \end{proof}
 
\section{Proof of Theorem \ref{t:1} }\label{s:4}

In this section, we present a proof of the main result. Set $\lambda^{\sharp}:=\lambda^*(0)$ and $\dot \lambda^{\sharp}:=\dot \lambda^*(0).$  Recall that by Lemma \ref{l:34}
\begin{eqnarray}\label{eq:m1}
 \dot \lambda^{\sharp}:= \frac{\int_{\Omega} ( {\bf v} \cdot \nabla u)  \phi_1}{\int_\Omega f(u)  \phi_1},
  \end{eqnarray}
  where
  $u$ is  the extremal solution of \eqref{eq:i1} with $A=0$ and $\phi_1>0$ is an eigenfunction corresponding to the principal
  eigenvalue of linearization of   \eqref{eq:i1} with $A=0$ about $u.$ That is,
  \begin{eqnarray}\label{eq:m2}
\left\{
\begin{array}{lll}
-\Delta u=\lambda^{\sharp}  f(u) & \mbox{in} & \Omega, \\
u=0 & \mbox{on} & \partial \Omega,
\end{array}
\right.
\end{eqnarray}
 \begin{eqnarray}\label{eq:m3}
\left\{
\begin{array}{lll}
-\Delta \phi_1 =\lambda^{\sharp}  f^{\prime }(u) \phi_1 & \mbox{in} & \Omega, \\
\phi_1=0 & \mbox{on} & \partial \Omega.
\end{array}
\right.
\end{eqnarray}
Integrating the numerator of \eqref{eq:m1}  by parts, taking  into account the boundary condition for $\phi_1$, and using the definition of the stream function $\psi$ , see (H2), we have
  \begin{eqnarray}\label{eq:ldot}
  \dot \lambda^{\sharp}=-\frac{\int_{\Omega} \psi  ( \nabla u\cdot \nabla^{\perp} \phi_1)}{\int_\Omega f(u)  \phi_1}.
  \end{eqnarray}
  Consequently, $\dot \lambda^{\sharp}=0$ for all stream functions $\psi$ satisfying (H2) if and only if $\nabla u \cdot \nabla^{\perp} \phi_1\equiv 0$ in $\Omega.$
  Let us  show that  it is only possible when $\Omega$ is a disk.
  
  \begin{lemma} \label{l:41}
  Assume that  $\nabla u \cdot \nabla^{\perp} \phi_1\equiv 0$ in $\Omega,$ then $\Omega$ is a disk.
  \end{lemma}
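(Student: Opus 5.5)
The plan is to show that $u$ and $\phi_1$ are functionally dependent, deduce that $\phi_1$ is a function of $u$ and hence that $\nabla u$ never vanishes inside $\Omega$, and then reach a disk via an overdetermined problem of Serrin type.

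\textbf{Functional dependence.} The identity $\nabla u\cdot\nabla^{\perp}\phi_1\equiv 0$ says that the Jacobian of the map $(u,\phi_1)$ vanishes. So $\phi_1$ is constant along the level curves of $u$ wherever $\nabla u\neq 0$.

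\textbf{A regular outer strip.} Near $\partial\Omega$ we have $\partial_\nu u<0$ and $\partial_\nu\phi_1<0$ by Hopf's lemma, since both functions are positive with zero boundary values. Hence in a collar $\{0<u<t_0\}$ the level sets of $u$ are smooth closed curves. On this collar $\phi_1=h(u)$ for some $C^2$ function $h$ with $h(0)=0$ and $h'>0$.

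\textbf{The ODE for $h$.} Substituting into \eqref{eq:m3} and using \eqref{eq:m2} gives
\[
-h'(u)\Delta u-h''(u)|\nabla u|^2=\lambda^{\sharp} f'(u)h(u),
\]
that is,
\[
h''(u)|\nabla u|^2=\lambda^{\sharp}\bigl(f(u)h'(u)-f'(u)h(u)\bigr).
\]

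\textbf{Two cases.} If $h''(t)\neq 0$ at some level $t$, then $|\nabla u|^2=G(u)$ there, where $G$ is an explicit function. In particular $|\nabla u|$ is constant on that level curve.

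If instead $h''\equiv 0$, then $h$ is linear, and the right-hand side forces $f h'=f' h$. This gives $h(t)=cf(t)$, and then $h(0)=0$ contradicts $f(0)>0$. So $h''$ cannot vanish identically. Since $f''$ is strictly increasing, I expect $fh'-f'h$ to have isolated zeros, but this needs checking.

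\textbf{Propagation to the whole domain.} Next, extend the relation $\phi_1=h(u)$ across all levels using analyticity. The coefficients are only $C^2$, however, so I would use unique continuation in the form of ODE uniqueness in the variable $t$ instead. I would also try to show that $u$ has no interior critical points other than a single maximum, arguing by continuation of the annular structure of the level sets.

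\textbf{Main obstacle.} The hard step is obtaining a genuine overdetermined condition: that $|\nabla u|$ is constant on $\partial\Omega$ itself, or on a family of level curves near it. Once that is available, I would invoke Serrin's moving plane theorem. It applies to the semilinear equation \eqref{eq:m2}, whose nonlinearity $f$ is Lipschitz, and it concludes that $\Omega$ is a disk.

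\textbf{Reaching the boundary.} To get the boundary condition, let $t\to 0$ in $|\nabla u|^2=G(u)$. The issue is that $h''(0)$ and the right-hand side at $t=0$ could degenerate. The right-hand side at $0$ equals $\lambda^{\sharp}f(0)h'(0)>0$, so $h''(0)\neq 0$ should hold if $h\in C^2$ up to the boundary. I would justify that regularity from $\phi_1/u$ being $C^1$ up to $\partial\Omega$ via Hopf's lemma.

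\textbf{Fallback.} If this route fails, I would try working with level curves directly. Constancy of $|\nabla u|$ on every level set near the boundary forces these level curves to have constant curvature, since the curvature is expressed through $\Delta u$ and $\partial_\nu|\nabla u|$, both of which are functions of $u$. The level curves would then be circles, and therefore so would $\partial\Omega$.
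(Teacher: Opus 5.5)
This is essentially the paper's proof: Hopf's lemma gives a boundary collar with $\phi_1=h(u)$, the identity $h''(u)|\nabla u|^2=\lambda^{\sharp}\bigl(f(u)h'(u)-f'(u)h(u)\bigr)$ evaluated on $\partial\Omega$ forces $h''(0)>0$ and $|\partial_\nu u|$ constant, and Serrin's theorem for the semilinear problem finishes. The case analysis, the propagation to the interior and the fallback are unnecessary, since Serrin only uses \eqref{eq:m2}, $u>0$ and the boundary data; for the regularity worry at $t=0$, note that $\phi_1/u\in C^1$ alone gives only $h\in C^1$, so either get $h\in C^2$ up to $t=0$ from $u,\phi_1\in C^{2,\alpha}(\bar\Omega)$ by inverting $s\mapsto u(x_0-s\nu)$ along a normal segment, or bypass $h''(0)$ by observing that $|\nabla u|$ is constant on each level curve $\{u=t\}$ for small $t>0$ (the right-hand side is positive there) and letting $t\to0$ using continuity of $\nabla u$ on $\bar\Omega$.
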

  
  \begin{proof}
  Assume $\nabla u \cdot \nabla^{\perp} \phi_1\equiv 0$ in $\Omega$. Observe that $\nabla u\cdot \nabla^{\perp} \phi_1=\frac{\partial (u,\phi_1)}{\partial(x,y)}$ is 
  the Jacobian of $(u,\phi_1).$  Since, $\frac{\partial (u,\phi_1)}{\partial(x,y)}=0,$ $u$ and $\phi_1$ are functionally dependent. Therefore, locally $\phi_1=G(u)$ for some
  $C^2$ function $G$. The regularity of $G$ follows from  Proposition \ref{p:1}. 
  
  Consider a small neighborhood of the boundary of $\Omega$, namely, $\Omega_{\delta} =\{(x,y)\in \Omega:0< {\rm dist}( (x,y), \partial \Omega)<\delta\}.$ Using Hopf's lemma, 
   we conclude that for $\delta>0$ sufficiently small, the level sets of $u$ and $\phi_1$ are regular and hence $G(u)$ is well defined in $\Omega_{\delta}.$
  We set,
  \begin{eqnarray}
  \phi_1=G(u) \quad \mbox{in} \quad \Omega_{\delta}.
  \end{eqnarray}
Next, observe that the boundary conditions  $u=\phi_1=0$ on $\partial \Omega$ imply  $G(0)=0$.
  Moreover, on $\partial \Omega$ we have $\nabla \phi_1 \cdot \nu=G^{\prime} (0) (\nabla u\cdot \nu).$  In view of  Hopf's lemma, $\nabla \phi_1  \cdot \nu, \nabla u \cdot \nu <0$.   Therefore, $G^{\prime}(0)>0.$
  We next compute,
  \begin{eqnarray}
  \Delta \phi_1=G^{\prime\prime}(u) |\nabla u|^2+G^{\prime} (u) \Delta u.
  \end{eqnarray}
  Substituting this relation into \eqref{eq:m3}, we have
  \begin{eqnarray}
 - G^{\prime\prime}(u) |\nabla u|^2-G^{\prime} (u) \Delta u=\lambda^{\sharp} f^{\prime}(u) G(u).
  \end{eqnarray}
  Using \eqref{eq:m2}, we can rewrite equation above  as
  \begin{eqnarray}
  G^{\prime\prime}(u) |\nabla u|^2=\lambda^{\sharp}(f(u) G^{\prime}(u)-f^{\prime}(u) G(u)).
  \end{eqnarray}
  Restricting this relation to the boundary $\partial \Omega$ and noting that $f(0), G^{\prime}(0)>0$, $G(0)=0,$ $|\nabla u| \Bigr\vert_{\partial \Omega} =|\frac{\partial}{\partial \nu} u |\Bigr \vert_{\partial \Omega}>0$ we conclude that on $\partial \Omega$ the following holds,
  \begin{eqnarray}
  G^{\prime\prime}(0) \left | \frac{\partial}{\partial \nu} u \right|^2=\lambda^{\sharp} f(0) G^{\prime}(0).
  \end{eqnarray}
  Thus, for function $u$ solving \eqref{eq:m2} with properties assumed above, we must have $G^{\prime\prime}(0)>0$
and hence
  \begin{eqnarray}
  \frac{\partial}{\partial \nu} u=-\left(\frac{ \lambda^{\sharp}f(0) G^{\prime}(0)}{G^{\prime\prime}(0)}\right)^{1/2}=-C \quad \mbox{on} \quad \partial \Omega,
  \end{eqnarray}
  where $C>0$ is some constant.
  As a result,  problem \eqref{eq:m2} complemented by the condition above becomes an overdetermined problem  which has a solution if and only if $\Omega$ is a disk, see \cite[Theorem 8.3.2]{Pucci}.
  \end{proof}

We now can prove the main result of this paper.

\begin{proof}[Proof of Theorem \ref{t:1}]
By Lemma \ref{l:41}, for any $\Omega$ which is not a disk, there exists an open set  $\tilde \Omega \subset \Omega$ such that either 
$\nabla u\cdot \nabla^{\perp} \phi_1>0 $ or  $\nabla u \cdot \nabla^{\perp} \phi_1<0$ in $\tilde \Omega.$ Choosing a stream function $\psi \in C^2$ such that  $\psi\ge 0, \psi\not\equiv 0$
compactly supported in $\tilde \Omega,$  we have $\dot \lambda^{\sharp} \neq 0.$  Consequently, for such a choice of the stream function we have $\lambda^*(A)=\lambda^{\sharp}
+\dot \lambda^{\sharp} A+o(A).$ In view that $\dot \lambda^{\sharp}  \ne 0,$ we have obtain \eqref{eq:i3}.
\end{proof}

  \begin{remark} \label{r:1} A natural question to ask is whether there is  an optimal choice of a stream function $\psi$   that maximizes $\dot \lambda^{\sharp}.$
  The answer  to this question is straightforward   if one imposes a higher regularity assumption on the reaction term.   Indeed,
  in view that the expression for  $\dot \lambda^{\sharp}$ is invariant under the transformation $\psi \to \psi+const$ and that the condition ${\bf v}\cdot \nu=0$
  requires $\psi=const$ on the boundary,  one can look for an optimal stream function satisfying $\psi=0$ on $\partial \Omega$ normalized such that $||\psi||_2=1.$
  We claim that the optimal choice is given by
   \begin{eqnarray}
  \psi^{\dagger}= - \frac{(\nabla u \cdot \nabla^{\perp} \phi_1)}{||(\nabla u \cdot \nabla^{\perp} \phi_1)||_{2}},
  \end{eqnarray}
  provided $f\in C^3$. In this case, $u, \phi_1 \in C^3(\bar \Omega)$ as follows from  \cite[Theorem  6.19]{GT} and hence 
  $(\nabla u \cdot \nabla^{\perp} \phi_1) \in C^2(\bar \Omega).$ Moreover, as follows from direct computations, $\psi^{\dagger}\Big \vert_{\partial \Omega}=0.$
  Hence, all of the conditions in (H2) are satisfied and $\psi^{\dagger}$ is an admissible function. The optimality of $\psi^{\dagger}$ then follows directly from the equality  case in the  Cauchy-Schwartz inequality.
  Thus, the largest value of $\dot \lambda^{\sharp}$ among all stream functions vanishing at the boundary with $||\psi||_2=1$ is
  \begin{eqnarray}
  \dot \lambda^{\dagger}=
\frac{\left(\int_{\Omega} |\nabla u \cdot \nabla^{\perp} \phi_1|^2\right)^{1/2}}{
  \int_{\Omega} f(u) \phi_1}.
   \end{eqnarray}
 \end{remark}
  
\begin{remark} It is important to note that the fast growth condition of the reaction term $f$ stated in (H1) can be relaxed. The condition of the fast growth is only used 
in establishing the regularity of the extremal solution through an estimate in Lemma \ref{l:p3}. The statement of this lemma remains valid if one
assumes that $f$ is $C^2$ positive strictly increasing convex function satisfying $\int_0^{\infty} \frac{ds}{f(s)} <\infty$ and has  the following  property.
There exists $c_0 \in (0,1),$  $c_1>0$  and $t_0\in (0,\infty)$ such that $f(t_2)>c_1 f(t_1)$,  $t_2>t_1>t_0$ implies $(1-c_0) f^{\prime}(t_2) \ge f^{\prime} (t_1)$
see \cite[ Proof of Theorem 3]{GMN}.
It is straightforward to verify that this assumption holds for most typical nonlinearities such as $f(u)=e^u$ and $f(u)=(1+u)^p$ with $p>1.$
\end{remark}




\bigskip
\noindent {\bf Acknowledgment:} This work was supported in a part by US-Israel Binational Science Foundation via  grant  BSF 2024033.
The authors would like to thank Fedor Nazarov for multiple enlightening discussions.

\bigskip

\noindent {\bf Declaration of Competing Interest:} The authors declare that they have no competing interests.

\bigskip

\noindent {\bf Data availability:}  No data was used or produced for the research described in the article.


\begin{bibdiv} 
\begin{biblist}


\bib{Gelfand}{article}{
   author={Gel\cprime fand, I. M.},
   title={Some problems in the theory of quasilinear equations},
   journal={Amer. Math. Soc. Transl. (2)},
   volume={29},
   date={1963},
   pages={295--381},
}

\bib{Dupaigne}{book}{
   author={Dupaigne, Louis},
   title={Stable solutions of elliptic partial differential equations},
   series={Chapman \& Hall/CRC Monographs and Surveys in Pure and Applied
   Mathematics},
   volume={143},
   publisher={Chapman \& Hall/CRC, Boca Raton, FL},
   date={2011},
   doi={10.1201/b10802},
}


\bib{FK39}{article}{
author= {Frank-Kamenetskii,D. A. },
title={Temperature distribution in the reaction vessel and the stationary theory of thermal explosion},
journal={Journal of Physical Chemistry},
volume={13},
number={6},
year={1939},
pages={738—755},
}

\bib{FK}{book} { AUTHOR = {Frank-Kamenetskii,D. A{.} }, TITLE
= {Diffusion and heat transfer in chemical kinetics}, PUBLISHER
= { Plenum Press}, ADDRESS = {New York}, YEAR = {1969},

}


\bib{ZBLM}{book}{
   author={Zel\cprime dovich, Ya. B.},
   author={Barenblatt, G. I.},
   author={Librovich, V. B.},
   author={Makhviladze, G. M.},
   title={The mathematical theory of combustion and explosions},
   publisher={Consultants Bureau [Plenum], New York},
   date={1985},
}


\bib{Fujita}{article}{
   author={Fujita, Hiroshi},
   title={On the nonlinear equations $\Delta u+e\sp{u}=0$ and $\partial
   v/\partial t=\Delta v+e \sp{v}$},
   journal={Bull. Amer. Math. Soc.},
   volume={75},
   date={1969},
   pages={132--135},
   doi={10.1090/S0002-9904-1969-12175-0},
}

\bib{Brezis1}{article}{
   author={Brezis, Ha\"im},
   author={Cazenave, Thierry},
   author={Martel, Yvan},
   author={Ramiandrisoa, Arthur},
   title={Blow up for $u_t-\Delta u=g(u)$ revisited},
   journal={Adv. Differential Equations},
   volume={1},
   date={1996},
   number={1},
   pages={73--90},
}


\bib{KC67}{article}{
   author={Keller, Herbert B.},
   author={Cohen, Donald S.},
   title={Some positone problems suggested by nonlinear heat generation},
   journal={J. Math. Mech.},
   volume={16},
   date={1967},
   pages={1361--1376},
}


\bib{KK74}{article}{
   author={Keener, J. P.},
   author={Keller, H. B.},
   title={Positive solutions of convex nonlinear eigenvalue problems},
   journal={J. Differential Equations},
   volume={16},
   date={1974},
   pages={103--125},
   doi={10.1016/0022-0396(74)90029-1},
}



\bib{CR75}{article}{
   author={Crandall, Michael G.},
   author={Rabinowitz, Paul H.},
   title={Some continuation and variational methods for positive solutions
   of nonlinear elliptic eigenvalue problems},
   journal={Arch. Rational Mech. Anal.},
   volume={58},
   date={1975},
   number={3},
   pages={207--218},
   doi={10.1007/BF00280741},
}




\bib{Brezis2}{article}{
   author={Brezis, Haim},
   author={V\'azquez, Juan Luis},
   title={Blow-up solutions of some nonlinear elliptic problems},
   journal={Rev. Mat. Univ. Complut. Madrid},
   volume={10},
   date={1997},
   number={2},
   pages={443--469},
}

\bib{Nedev}{article}{
   author={Nedev, Gueorgui},
   title={Regularity of the extremal solution of semilinear elliptic
   equations},
   journal={C. R. Acad. Sci. Paris S\'er. I Math.},
   volume={330},
   date={2000},
   number={11},
   pages={997--1002},
   doi={10.1016/S0764-4442(00)00289-5},
}

\bib{Martel}{article}{
   author={Martel, Yvan},
   title={Uniqueness of weak extremal solutions of nonlinear elliptic
   problems},
   journal={Houston J. Math.},
   volume={23},
   date={1997},
   number={1},
   pages={161--168},
}



\bib{Cabre}{article}{
   author={Cabr\'e, Xavier},
   author={Figalli, Alessio},
   author={Ros-Oton, Xavier},
   author={Serra, Joaquim},
   title={Stable solutions to semilinear elliptic equations are smooth up to
   dimension 9},
   journal={Acta Math.},
   volume={224},
   date={2020},
   number={2},
   pages={187--252},
   doi={10.4310/acta.2020.v224.n2.a1},
}




\bib{Law}{book}{
author={ Law, C. K. },
title={Combustion Physics},
publisher={Cambridge University Press},
address={Cambridge},
year={2010}
}


\bib{Sem}{article}{
author={Semenov, N.N.},
title={Thermal theory of combustion and explosion},
Journal={Physics-Uspekhi},
Volume={23},
Issue={3},
year={1940},
pages={251-292},
}

\bib{Will}{book} {
AUTHOR = {Williams, F.},
     TITLE = {Combustion theory},
 PUBLISHER = {Perseus Books},
   ADDRESS = {Reading, MA},
      YEAR = {1985},
    }





\bib{Kagan}{article}{
  author={Kagan, L},
   author={Berestycki, H},
   author={Joulin, G},
   author={Sivashinsky, G},
   title={The effect of stirring on the limits of thermal explosion},
   journal={Combust. Theory and Modelling},
   volume={1},
   date={1997},
   pages={97--11},
}





\bib{Kiselev}{article}{
   author={Berestycki, Henri},
   author={Kiselev, Alexander},
   author={Novikov, Alexei},
   author={Ryzhik, Lenya},
   title={The explosion problem in a flow},
   journal={J. Anal. Math.},
   volume={110},
   date={2010},
   pages={31--65},
   doi={10.1007/s11854-010-0002-7},
}

\bib{Novikov}{article}{
   author={Novikov, Alexei},
   title={On the explosion problem in a ball},
   journal={Commun. Math. Sci.},
   volume={13},
   date={2015},
   number={4},
   pages={1025--1032},
   doi={10.4310/CMS.2015.v13.n4.a9},
}


\bib{Iyer}{article}{
   author={Iyer, Gautam},
   author={Novikov, Alexei},
   author={Ryzhik, Lenya},
   author={Zlato\v s, Andrej},
   title={Exit times of diffusions with incompressible drift},
   journal={SIAM J. Math. Anal.},
   volume={42},
   date={2010},
   number={6},
   pages={2484--2498},
   doi={10.1137/090776895},
}





\bib{eigenvalue}{article}{
   author={Berestycki, Henri},
   author={Hamel, Fran\c cois},
   author={Nadirashvili, Nikolai},
   title={Elliptic eigenvalue problems with large drift and applications to
   nonlinear propagation phenomena},
   journal={Comm. Math. Phys.},
   volume={253},
   date={2005},
   number={2},
   pages={451--480},
   doi={10.1007/s00220-004-1201-9},
}

\bib{Satt}{article}{
   author={Sattinger, D. H.},
   title={Monotone methods in nonlinear elliptic and parabolic boundary
   value problems},
   journal={Indiana Univ. Math. J.},
   volume={21},
   date={1971/72},
   pages={979--1000},
   doi={10.1512/iumj.1972.21.21079},
}




\bib{BNV}{article}{
   author={Berestycki, H.},
   author={Nirenberg, L.},
   author={Varadhan, S. R. S.},
   title={The principal eigenvalue and maximum principal for second-order
   elliptic operators in general domains},
   journal={Comm. Pure Appl. Math.},
   volume={47},
   date={1994},
   number={1},
   pages={47--92},
   doi={10.1002/cpa.3160470105},
}

\bib{GT}{book}{
   author={Gilbarg, David},
   author={Trudinger, Neil S.},
   title={Elliptic partial differential equations of second order},
   series={Classics in Mathematics},
   publisher={Springer-Verlag, Berlin},
   date={2001},
}


\bib{semistab}{article}{
   author={Cowan, Craig},
   author={Ghoussoub, Nassif},
   title={Regularity of the extremal solution in a MEMS model with
   advection},
   journal={Methods Appl. Anal.},
   volume={15},
   date={2008},
   number={3},
   pages={355--360},
   doi={10.4310/MAA.2008.v15.n3.a7},
}

\bib{LYZ}{article}{
   author={Luo, Xue},
   author={Ye, Dong},
   author={Zhou, Feng},
   title={Regularity of the extremal solution for some elliptic problems
   with singular nonlinearity and advection},
   journal={J. Differential Equations},
   volume={251},
   date={2011},
   number={8},
   pages={2082--2099},
   doi={10.1016/j.jde.2011.07.011},
}


\bib{GMN}{article}{
   author={Gordon, Peter V.},
   author={Moroz, Vitaly},
   author={Nazarov, Fedor},
   title={Gelfand-type problem for turbulent jets},
   journal={J. Differential Equations},
   volume={269},
   date={2020},
   number={7},
   pages={5959--5996},
   doi={10.1016/j.jde.2020.04.026},
}


\bib{Evans}{book}{
   author={Evans, Lawrence C.},
   title={Partial differential equations},
   series={Graduate Studies in Mathematics},
   volume={19},
   edition={2},
   publisher={American Mathematical Society, Providence, RI},
   date={2010},
   pages={xxii+749},
   isbn={978-0-8218-4974-3},
   doi={10.1090/gsm/019},
}

\bib{AF}{book}{
   author={Adams, Robert A.},
   author={Fournier, John J. F.},
   title={Sobolev spaces},
   series={Pure and Applied Mathematics (Amsterdam)},
   volume={140},
   edition={2},
   publisher={Elsevier/Academic Press, Amsterdam},
   date={2003},
   pages={xiv+305},
   isbn={0-12-044143-8},
}

\bib{Korman}{book}{
   author={Korman, Philip},
   title={Global solution curves for semilinear elliptic equations},
   publisher={World Scientific Publishing Co. Pte. Ltd., Hackensack, NJ},
   date={2012},
   doi={10.1142/8308},
}


\bib{QS}{book}{
   author={Quittner, Pavol},
   author={Souplet, Philippe},
   title={Superlinear parabolic problems},
   series={Birkh\"auser Advanced Texts: Basler Lehrb\"ucher. [Birkh\"auser
   Advanced Texts: Basel Textbooks]},
   edition={2},
   note={Blow-up, global existence and steady states},
   publisher={Birkh\"auser/Springer, Cham},
   date={2019},
   pages={xvi+725},
   doi={10.1007/978-3-030-18222-9},
}



\bib{Pucci}{book}{
   author={Pucci, Patrizia},
   author={Serrin, James},
   title={The maximum principle},
   series={Progress in Nonlinear Differential Equations and their
   Applications},
   volume={73},
   publisher={Birkh\"auser Verlag, Basel},
   date={2007},
   pages={x+235},
   isbn={978-3-7643-8144-8},
}











\end{biblist}
\end{bibdiv}
\end{document}